\newtheorem{theorem}{Theorem}
\newtheorem{corollary}[theorem]{Corollary}
\newtheorem{example}[theorem]{Example}
\newtheorem{lemma}[theorem]{Lemma}
\newtheorem{remark}[theorem]{Remark}
\begin{document}

\title[On equivalence of 3-order linear differential operators]{On equivalence of third order linear differential operators on
two-dimensional manifolds}
\author[Valentin Lychagin]{Valentin Lychagin}
\address{University of Tromso, Tromso, Norway; Institute of Control Sciences of RAS, Moscow, Russia}
\email{Valentin.Lychagin@matnat.uit.no}
\author[Valeriy Yumaguzhin]{Valeriy Yumaguzhin}
\address{
Program Systems Institute of RAS, Pereslavl'-Zales\-skiy, Russia;  Institute of Control Sciences of RAS, Moscow, Russia}
\email{yuma@diffiety.botik.ru}
\thanks{}
%
\subjclass[2010]{Primary: 58J70, 53C05, 35A30; Secondary: 35G05, 53A55}
\keywords{3-rd order partial differential operator, jet bundle, differential invariant, equivalence problem}

\begin{abstract}
We study differential invariants of the third order linear differential
operators and use them to find conditions for equivalence of differential
operators acting in line bundles on two dimensional manifolds with respect
to groups of authomorphisms.
\end{abstract}

\maketitle

\section{Introduction}

This paper is a continuation of (\cite{LY2}) and here we analyze equivalence
of the third order linear differential operators, acting in sections of line
bundles over an oriented 2-dimensional manifold with respect to the group of
automorphisms of the line bundle. The method we applied is very similar to
the method used in (\cite{LY2}) and the novel idea we used is to get global
equivalence by using \textit{natural coordinates }delivering by differential
invariants. Also we use Chern (\cite{Ak},\cite{Ch},\cite{Nag}) and Wagner (%
\cite{Wag}) connections instead of Levi-Civita connection in (\cite{LY2})
and show that any regular third order linear differential operators acting
in sections of line bundles defines natural affine connections in the
bundles. This allows us, by using the quantization, to substitute the
operator by its total symbol and then find its field of rational
differential invariants.

A short version of this paper was presented on Conference dedicated to the
70th birthday of Joseph Krasil'shchik, Trieste 2018, and we wish to him the
very long and the very productive life.

\section{Differential operators}

\subsection{Notations}

The following notations we use in this paper. We denote by $\tau
:TM\rightarrow M$ and $\tau ^{\ast }:T^{\ast }M\rightarrow M$ the tangent
and respectively cotangent bundles for a manifold $M.$ By $\mathbf{1}:%
\mathbb{R}\times M\rightarrow M$ we denote the trivial line bundle.

The symmetric and exterior powers of a vector bundle $\pi :E\left( \pi
\right) \rightarrow M$ \ we'll denote by $\mathbf{S}^{k}\left( \pi \right) $
and $\mathbf{\Lambda }^{k}\left(\pi\right) .$ The module of smooth sections
of bundle $\pi $ we denote by $C^{\infty }\left( \pi \right) ,$ and for the
cases tangent, cotangent and the trivial bundles we'll use the following
notations: $\Sigma _{k}\left( M\right) =C^{\infty }\left( \mathbf{S}%
^{k}\left( \tau \right) \right)$ is the module of symmetric $k$-vectors (or
symbols), $\Sigma ^{k}\left( M\right) =C^{\infty }\left( \mathbf{S}%
^{k}\left( \tau ^{\ast }\right) \right)$ is the module of symmetric $k$%
-forms, $\Omega _{k}\left( M\right) =C^{\infty }\left( \mathbf{\Lambda }%
^{k}\left( \tau \right) \right)$ is the module of skew-symmetric $k$-vectors,
$\Omega ^{k}\left( M\right) =C^{\infty }\left( \mathbf{\Lambda }%
^{k}\left( \tau ^{\ast }\right) \right)$ is the module of exterior $k$-forms, and
$C^{\infty}(\mathbf{1})=C^{\infty}(M)$.

The bundles of $k$-jets of sections of bundle $\pi $ we denote by $\pi _{k}:%
\mathbf{J}^{k}\left( \pi \right) \rightarrow M$ and by $\pi _{k,l}:\mathbf{J}%
^{k}\left( \pi \right) \rightarrow \mathbf{J}^{l}\left( \pi \right) $ we
denote the reduction of $k$-jets to $l$-jets, $k\geq l.$

There is the following exact sequence of vector bundles%
\begin{equation*}
0\rightarrow \mathbf{S}^{k}\left( \tau ^{\ast }\right) \otimes\pi 
\rightarrow \mathbf{J}^{k}\left(\pi \right)\stackrel{\pi _{k,k-1}}{%
\longrightarrow}\mathbf{J}^{k-1}\left( \pi \right) \rightarrow 0,
\end{equation*}%
connecting $\left( k-1\right) $ and $k$ -jets.

Differential operators of order $k$ acting from a vector bundle $\alpha $ to
another bundle, say $\beta$, $\Delta :C^{\infty }\left( \alpha \right)
\rightarrow C^{\infty }\left( \beta \right) ,$ could be lifted to operators $%
\widehat{\Delta }:C^{\infty }\left( \widehat{\alpha }\right) \rightarrow
C^{\infty }\left( \widehat{\beta }\right) ,$ where $\widehat{\alpha }$ and $%
\widehat{\beta }$ are vector bundles over $\mathbf{J}^{\infty }\left( \pi
\right) ,$ induced by the projection $\pi _{\infty }:\mathbf{J}^{\infty
}\left( \pi \right) \rightarrow M,$ and operator $\widehat{\Delta }$ is 
defined by the universal property 
\begin{equation*}
j_{k+l}\left( h\right) ^{\ast }\circ \widehat{\Delta }=\Delta \circ
j_{l}\left( h\right) ^{\ast },
\end{equation*}%
for all sections $h\in C^{\infty }\left( \pi \right) $ and $l=0,1,\ldots$ .

We call operators $\widehat{\Delta }$ as \textit{total lift} of $\Delta .$
Here we'll especially use the following two cases. The de Rham operator $%
\Delta =d:\Omega ^{i}\left( M\right) \rightarrow \Omega ^{i+1}\left(
M\right) .$ Then sections $C^{\infty }\left( \widehat{\Lambda^{i}(\tau
^{\ast })}\right) =\Omega _{h}^{i}\left( \mathbf{J}^{\infty }\left( \pi
\right) \right) $ are horizontal differential $i$-forms on $\mathbf{J}%
^{\infty }\left( \pi \right) $ and $\widehat{d}$ the \textit{total
differential}.

In the another case, when $\Delta =d_{\nabla }:C^{\infty }\left( \alpha
\right) \rightarrow C^{\infty }\left( \alpha \right) \otimes \Omega
^{1}\left( M\right) $ is a covariant differential of a connection $\nabla $
in the bundle $\alpha ,$ operator $\widehat{d}_{\nabla }:C^{\infty }\left( 
\widehat{\alpha }\right) \rightarrow C^{\infty }\left( \widehat{\alpha }%
\right) \otimes \Omega _{h}^{1}\left( \mathbf{J}^{\infty }\left( \pi \right)
\right) $ is the \textit{total covariant differential}.

Linear combinations of total lifts of vector fields are called \textit{total
derivations} and linear combinations of compositions of total derivatives
are \textit{total differential operators}.

\subsection{Symbols}

Let $M$ \ be an oriented $2$-dimensional oriented manifold and let $\xi
:E\left( \xi \right) \rightarrow M$ \ be a line bundle. We denote by $%
\mathbf{Diff}_{k}\left( \xi \right) $ the $C^{\infty }\left( M\right) $%
-module of differential operators of order $\leq k,\ $acting in the bundle:$%
\ \ A:C^{\infty }\left( \xi \right) \rightarrow C^{\infty }\left( \xi
\right) ,$ if $A\in \mathbf{Diff}_{k}\left( \xi \right) .$

By \textit{leading or principal symbol} $\sigma _{k}\left( A\right) $ of
operator $A\in \mathbf{Diff}_{k}\left( \xi \right) $ we mean the equivalence
class 
\begin{equation*}
\sigma _{k,A}=A\,\rm{mod}\,\mathbf{Diff}_{k-1}\left( \xi \right) .
\end{equation*}%
It is known that the symbol could be also viewed as a fibre-wise homogeneous
polynomial of degree $k$ on the cotangent bundle $\tau ^{\ast }:T^{\ast
}M\rightarrow M$ and the following sequence

\begin{equation*}
0\rightarrow \mathbf{Diff}_{k-1}\left( \xi \right) \rightarrow \mathbf{Diff}%
_{k}\left( \xi \right) \overset{\sigma }{\rightarrow }
{\Sigma}_k\rightarrow 0 
\end{equation*}%
is exact.

If $\left( x,y\right) $ are local coordinates on $M$ and $e$ is a local
basic section of $\xi $ , then differential operator $A\in \mathbf{Diff}%
_{3}\left( \xi \right) $ can be represented in the form 
\begin{eqnarray*}
A &=&a_{1}\partial _{x}^{3}+3a_{2}\partial _{x}^{2}\partial
_{y}+3a_{3}\partial _{x}\partial _{y}^{2}+a_{4}\partial _{y}^{3}+ \\
&&b_{1}\partial _{x}^{2}+2b_{2}\partial _{x}\partial _{y}+b_{3}\partial
_{y}^{2}+c_{1}\partial _{x}+c_{2}\partial _{y}+a_{0},
\end{eqnarray*}%
where all coefficients are smooth functions and the action of operator $A$
on a section $s=he,$ where $h$ is a smooth function, equals $A\left(
h\right) e.$

The symbol of operator $A$ is a symmetric $3$-vector 
\begin{equation}
\sigma _{3,A}=a_{1}\partial _{x}^{3}+3a_{2}\partial _{x}^{2}\cdot \partial
_{y}+3a_{3}\partial _{x}\cdot \partial _{y}^{2}+a_{4}\partial _{y}^{3}\in 
\Sigma_3(M)  \label{locals}
\end{equation}%
where dots (and degrees) stand for symmetric product of vector fields.

In canonical coordinates $\left( x,y,p_{x},p_{y}\right) $ on $T^{\ast }M$ this tensor is a cubic polynomial (=Hamiltonian)%
\begin{equation*}
\sigma
_{3,A}=a_{1}p_{x}^{3}+3a_{2}p_{x}^{2}p_{y}+3a_{3}p_{x}p_{y}^{2}+a_{4}p_{y}^{3}
\end{equation*}%
on $T^{\ast }M$.

We say that the symbol $\sigma _{3,A}$ is \textit{regular} (at a point or
domain) if $\sigma _{3,A},$ as a polynomial on $T^{\ast }M,$ has
distinct roots. Denote by 
\begin{equation*}
\Delta \left( \sigma _{3,A}\right) =6a_{1}a_{2}a_{3}a_{4}-4\left(
a_{1}a_{3}^{3}+a_{4}a_{2}^{3}\right) +3a_{2}^{2}a_{3}^{2}-a_{1}^{2}a_{4}^{2}
\end{equation*}%
the polynomial proportional to the discriminant of $\sigma _{3,A}.$

Then polynomial $\sigma _{3,A}$ has three distinct real roots if $\Delta
\left( \sigma _{3,A}\right) >0$ and one real and two complex roots if $%
\Delta \left( \sigma _{3,A}\right) <0.$

We'll say that the differential operator $A$ is \textit{hyperbolic} if $%
\Delta \left( \sigma _{3,A}\right) >0$ and \textit{ultrahyperbolic}, if $%
\Delta \left( \sigma _{3,A}\right) <0.$

Locally, symbol of a hyperbolic operator could be presented as a symmetric
product of pair wise linear independent vector fields: $\sigma =X_{1}\cdot
X_{2}\cdot X_{3},$ and therefore there are local coordinates $\left(
x,y\right) $ such that 
\begin{equation}
\sigma _{3,A}=\left( a\partial _{x}+b\partial _{y}\right) \cdot \partial
_{x}\cdot \partial _{y},  \label{hyper}
\end{equation}%
where $a$ and $b$ are smooth functions and $ab\neq 0.$

For the case of ultrahyperbolic symbols we have $\sigma _{3,A}=X\cdot q,$
where $X$ is a non zero vector and $q$ is a positive symmetric $2$-vector.
Therefore, there are local coordinates $\left( x,y\right) $ such that%
\begin{equation}
\sigma =\left( a\partial _{x}+b\partial _{y}\right) \cdot \left( \partial
_{x}^{2}+\partial _{y}^{2}\right) ,  \label{ultrhyper}
\end{equation}%
where $a$ and $b$ are smooth functions and $a^{2}+b^{2}>0.$

\subsection{Groups actions}

In this paper we study orbits of the 3rd order differential operators with
respect to the following groups:

\begin{enumerate}
\item For the scalar differential operators $A\in \mathbf{Diff}_{3}\left( 
\mathbf{1}\right) ,$ the group is the group $\mathcal{G}\left( M\right) $ of
diffeomorphisms of $M$ with the natural action 
\begin{equation}
\phi _{\ast }:A\longmapsto \phi _{\ast }\circ A\circ \phi _{\ast }^{-1},
\label{diffAct}
\end{equation}%
where $\phi _{\ast }=\phi ^{\ast -1}:C^{\infty }\left( M\right) \rightarrow
C^{\infty }\left( M\right) $ is the induced algebra morphism and $\phi
^{\ast }\left( f\right) =f\circ \phi ,$ $f\in C^{\infty }\left( M\right) .$

\item For general linear bundles $\xi $ and operators $A\in \mathbf{Diff}%
_{3}\left( \xi \right) $ we'll take the group of automorphisms $%
\mathbf{Aut}(\xi )$ with the following actions.

For sections $s\in C^{\infty }\left( \xi \right) $ and elements $\widetilde{%
\phi }\in \mathbf{Aut}(\xi ),$ we define action as follows%
\begin{equation*}
\widetilde{\phi }_{\ast }:s\longmapsto \widetilde{\phi }\circ s\circ \phi
^{-1},
\end{equation*}%
and 
\begin{equation}
\widetilde{\phi }_{\ast }:A\longmapsto \widetilde{\phi }_{\ast }\circ A\circ 
\widetilde{\phi _{\ast }}^{-1}.  \label{AutAct}
\end{equation}
\end{enumerate}

These groups are connected by the sequence 
\begin{equation*}
1\rightarrow \mathcal{F}\left( M\right) \rightarrow \mathbf{Aut}(\xi
)\rightarrow \mathcal{G}\left( M\right) \rightarrow 1,
\end{equation*}%
where $\mathcal{F}\left( M\right) $ is the multiplicative group of smooth
nowhere vanishing functions on $M$ .

Later on we'll get conditions under which a diffeomorphism $\phi \in 
\mathcal{G}\left( M\right) $ has a lift in $\mathbf{Aut}(\xi ).$

Assume now that $e$ is a local base for $\xi $ , i.e.nowhere vanishing
section in a domain. Then for any operator $A\in \mathbf{Diff}_{3}\left( \xi
\right) $ we define a scalar operator $A_{e}\in \mathbf{Diff}_{3}\left( 
\mathbf{1}\right) $ in such a way that 
\begin{equation*}
A\left( fe\right) =A_{e}\left( f\right) e,
\end{equation*}%
for $f\in C^{\infty }\left( M\right) .$

It is easy to see that change of base leads us to the following
transformation of scalar models:%
\begin{equation}
A_{e}\longmapsto A_{\widetilde{e}}=h\circ A\circ h^{-1},  \label{funcAct}
\end{equation}%
if $\widetilde{e}=he,$ and $h\in \mathcal{F}\left( M\right) .$

Therefore, (locally) action (\ref{AutAct}) could be considered as
composition of actions (\ref{diffAct}) and (\ref{funcAct}).

\section{Connections and symbols}

In this section $\sigma \in \Sigma_3(M)$ is a regular symmetric $3$%
-vector.

\subsection{Wagner connection}

The following result due to Wagner, (\cite{Wag}).

\begin{theorem}
For any regular symbol $\sigma \in \Sigma_3(M)$ there exists and
unique connection $\nabla ^{\sigma }$ in the tangent bundle such that 
\begin{equation}
d_{\nabla ^{\sigma }}\left( \sigma \right) =0,  \label{Wconn}
\end{equation}%
where $d_{\nabla ^{\sigma }}:\Sigma _{3}\left( M\right) \rightarrow \Omega
^{1}\left( M\right) \otimes \Sigma _{3}\left( M\right) $ \ is the covariant
differential.
\end{theorem}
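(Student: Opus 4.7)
The plan is to reduce the claim to a pointwise algebraic statement and then solve smoothly on $M$. Fix an arbitrary smooth reference connection $\nabla_{0}$ on $TM$; every other linear connection on $TM$ has the form $\nabla = \nabla_{0}+A$ for a unique tensor field $A\in \Omega^{1}(M)\otimes \mathrm{End}(TM)$. Since the covariant differential is affine in the connection, the equation $d_{\nabla}\sigma = 0$ reads
\begin{equation*}
d_{\nabla_{0}}\sigma + A\cdot \sigma \;=\; 0,
\end{equation*}
where at each point $p$ and each $X\in T_{p}M$ the endomorphism $A_{p}(X)\in \mathrm{End}(T_{p}M)$ acts on $\sigma_{p}\in \mathbf{S}^{3}(T_{p}M)$ by the derivation extension $B\cdot (v_{1}\cdot v_{2}\cdot v_{3}) = B(v_{1})\cdot v_{2}\cdot v_{3}+v_{1}\cdot B(v_{2})\cdot v_{3}+v_{1}\cdot v_{2}\cdot B(v_{3})$. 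Everything therefore reduces to showing that the linear map
\begin{equation*}
L_{\sigma_{p}}\colon \mathrm{End}(T_{p}M)\longrightarrow \mathbf{S}^{3}(T_{p}M),\qquad B\longmapsto B\cdot \sigma_{p},
\end{equation*}
is a vector space isomorphism at every point where $\sigma$ is regular.

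The source and target of $L_{\sigma_{p}}$ both have dimension $4$, so injectivity suffices. Decompose $B = c\,\mathrm{Id}+B_{0}$ with $c\in \mathbb{R}$ and $B_{0}\in \mathfrak{sl}(T_{p}M)$. The identity acts by multiplication by $3$ on $\mathbf{S}^{3}$, so $B\cdot \sigma_{p}=0$ would force $\sigma_{p}$ to be a real eigenvector of $B_{0}$ with real eigenvalue $-3c$. When $B_{0}$ has distinct real eigenvalues $\pm \lambda$ with eigenvectors $v_{1},v_{2}$, the eigenvectors of $B_{0}$ on $\mathbf{S}^{3}$ are exactly $v_{1}^{3},\,v_{1}^{2}\cdot v_{2},\,v_{1}\cdot v_{2}^{2},\,v_{2}^{3}$, with the four distinct eigenvalues $3\lambda,\lambda,-\lambda,-3\lambda$; each such eigenvector is a cubic with a repeated root, hence non-regular. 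A nonzero nilpotent $B_{0}$ has only $v^{3}$ in its kernel and no other eigenvectors. A $B_{0}$ with purely imaginary eigenvalues has no real eigenvector on $\mathbf{S}^{3}$, since all its eigenvalues there are nonzero imaginary. The remaining possibility $B_{0}=0$ with $c\neq 0$ gives $-3c\sigma_{p}=0$, impossible since $\sigma_{p}\neq 0$. Hence $B=0$, so $L_{\sigma_{p}}$ is injective and therefore an isomorphism.

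With $L_{\sigma_{p}}$ an isomorphism depending smoothly on $p$, the tensor equation displayed above has a unique smooth solution $A$ on the regular locus. The connection $\nabla^{\sigma}=\nabla_{0}+A$ is then the unique one satisfying (\ref{Wconn}). Independence of the chosen reference $\nabla_{0}$ follows from this uniqueness, as does the fact that local solutions produced in different coordinate charts agree on overlaps; thus $\nabla^{\sigma}$ is globally defined on $M$.

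The main obstacle is the algebraic injectivity of $L_{\sigma_{p}}$ at regular points. As a sanity check one may compute $\det L_{\sigma}$ directly in any basis: its entries are linear in the coefficients $(a_{1},a_{2},a_{3},a_{4})$ of $\sigma$, so $\det L_{\sigma}$ is a polynomial of degree $4$ which, by $GL_{2}$-equivariance, must be a scalar multiple of the discriminant $\Delta(\sigma)$, and checking the constant on a single regular example confirms it is nonzero exactly on the regular locus.
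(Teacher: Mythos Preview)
Your argument is correct and takes a genuinely different route from the paper. The paper simply writes the condition $d_{\nabla}\sigma=0$ in local coordinates as an $8\times 8$ linear system in the Christoffel symbols $\Gamma_{ij}^{k}$ (no symmetry assumed), asserts that its determinant equals $81\,\Delta(\sigma)^{2}$, and concludes. You instead factor the $8\times 8$ problem as $\mathrm{id}_{T^{\ast}}\otimes L_{\sigma_{p}}$ and prove injectivity of the $4$-dimensional map $L_{\sigma_{p}}\colon \mathrm{End}(T_{p}M)\to \mathbf{S}^{3}(T_{p}M)$ by a clean case analysis on the Jordan type of $B_{0}\in\mathfrak{sl}(T_{p}M)$, observing that every eigenvector of such a $B_{0}$ in $\mathbf{S}^{3}$ is a cubic with a repeated linear factor. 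This buys a coordinate-free explanation of \emph{why} regularity (distinct roots) is exactly the hypothesis needed, and avoids computing any determinant; the paper's approach, on the other hand, yields the explicit value $81\,\Delta(\sigma)^{2}$ (equivalently $\det L_{\sigma}=\pm 9\,\Delta(\sigma)$, consistent with your sanity check), which is occasionally useful downstream. Both arguments are local, and your appeal to uniqueness for the global well-definedness of $\nabla^{\sigma}$ is the standard one.
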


\begin{proof}
Let's choose a local coordinates $\left( x,y\right) $ on $M,$ and let $%
\Gamma _{ij}^{k}$ be the Christoffel symbols of connection $\nabla $ that
satisfies the theorem.

Then 
\begin{equation*}
\partial _{l}a_{ijk}+\Gamma _{lm}^{i}a_{mjk}+\Gamma _{lm}^{j}a_{imk}+\Gamma
_{lm}^{k}a_{ijm}=0,
\end{equation*}%
where $a_{ijk}$ are components of $\sigma $ and in the previous notations $%
a_{111}=a_{1},a_{112}=a_{211}=a_{121}=a_{2},$ $a_{122}=a_{212}=a_{221}=a_{3}$
and $a_{222}=a_{4}.$

Consider this as a system of linear equations with respect to Christoffel
symbols $\Gamma _{ij}^{k}.$ This is $8\times 8$ system (we did not assumed
that $\nabla $ is a torsion free connection). The determinant of this system
equals $81\Delta \left( \sigma \right) ^{2}\neq 0,$ therefore (\ref{Wconn})
determines the unique connection.
\end{proof}

We'll call connection defining by (\ref{Wconn}), \textit{Wagner connection}.

In the case of hyperbolic symbols we'll choose local coordinates in such a
way that $\sigma $ has formula (\ref{hyper}) holds. Then the non zero
Christoffel coefficients of the Wagner connection could be found from the
above system of linear equations:%
\begin{eqnarray*}
\Gamma _{11}^{1} &=&\frac{1}{3}\left( \ln \frac{b}{a^{2}}\right) _{x},\ \
\Gamma _{22}^{2}=\frac{1}{3}\left( \ln \frac{a}{b^{2}}\right) _{y}, \\
\Gamma _{12}^{1} &=&\frac{1}{3}\left( \ln \frac{b}{a^{2}}\right) _{y},\ \
\Gamma _{21}^{2}=\frac{1}{3}\left( \ln \frac{a}{b^{2}}\right) _{x}.
\end{eqnarray*}

In the similar way, for the ultrahyperbolic symbols (\ref{ultrhyper}), we
get 
\begin{eqnarray*}
\Gamma _{11}^{1} &=&\Gamma _{21}^{2}=-\frac{1}{6}[\ln \left(
a^{2}+b^{2}\right) ]_{x},\ \ \Gamma _{12}^{1}=\Gamma _{22}^{2}=-\frac{1}{6}%
[\ln \left( a^{2}+b^{2}\right) ]_{y}, \\
\Gamma _{21}^{1} &=&-\Gamma _{11}^{2}=\frac{a_{x}b-ab_{x}}{a^{2}+b^{2}},\ \
\Gamma _{22}^{1}=-\Gamma _{12}^{2}=\frac{ab_{y}-a_{y}b}{a^{2}+b^{2}}.
\end{eqnarray*}

\begin{corollary}
\begin{enumerate}
\item The curvature of the Wagner connection equals zero.

\item The torsion form of the Wagner connection equals%
\begin{equation*}
\theta _{\sigma }^{h}=\frac{1}{3}\left( \ln \frac{b^{2}}{a}\right) _{x}dx+%
\frac{1}{3}\left( \ln \frac{a^{2}}{b}\right) _{y}dy,
\end{equation*}%
for the hyperbolic case, and%
\begin{equation*}
\theta _{\sigma }^{u}=\frac{ab_{y}-a_{y}b}{a^{2}+b^{2}}dx+\frac{a_{x}b-ab_{x}%
}{a^{2}+b^{2}}dy-\frac{1}{6}d[\ln \left( a^{2}+b^{2}\right) ],
\end{equation*}%
for the ultrahyperbolic case.
\end{enumerate}
\end{corollary}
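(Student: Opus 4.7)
My plan is to verify both statements by direct substitution of the Christoffel symbols listed in the proof of Theorem~1, organised via Cartan's structural equations. Write $\omega^i_j$ for the connection 1-forms, so that the curvature 2-forms are $\Omega^i_j = d\omega^i_j + \omega^i_k\wedge\omega^k_j$ and the torsion tensor is $T^k_{ij} = \Gamma^k_{ij} - \Gamma^k_{ji}$.

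For part (1), the hyperbolic case is essentially immediate: introducing $A = \tfrac{1}{3}\ln(b/a^2)$ and $B = \tfrac{1}{3}\ln(a/b^2)$, the listed Christoffel data collapses to $\omega^1_1 = dA$, $\omega^2_2 = dB$, and $\omega^1_2 = \omega^2_1 = 0$. Then each $d\omega^i_j$ is either the differential of an exact form or of $0$, and every wedge $\omega^i_k\wedge\omega^k_j$ contains a vanishing factor, so $\Omega^i_j = 0$. In the ultrahyperbolic case all four connection forms are nontrivial, but the abbreviation $\rho := -\tfrac{1}{6}\ln(a^2+b^2)$ yields the structural simplifications $\omega^1_1 = \omega^2_2 = d\rho$ and $\omega^2_1 = -\omega^1_2$; these collapse the four structural equations to a single algebraic identity obtained by differentiating the rational expressions $(a_xb-ab_x)/(a^2+b^2)$ and $(ab_y-a_yb)/(a^2+b^2)$ and matching against the quadratic wedge $\omega^1_2\wedge\omega^2_1$.

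A more conceptual route for part (1), which I would include as a remark, uses holonomy: any element of the holonomy group of $\nabla^\sigma$ at a point $p$ must stabilise $\sigma_p \in S^3T_pM$. In the hyperbolic case $\sigma_p$ factors as $X_1\cdot X_2\cdot X_3$ with three pairwise distinct lines, and a linear map preserving these three directions together with the cubic itself must be a scalar $\lambda\,\mathrm{id}$ with $\lambda^3 = 1$; in the ultrahyperbolic case $\sigma_p = X\cdot q$ with $q$ positive definite, and the stabiliser is contained in the finite reflection group fixing both $X$ and $q$. In either case the identity component of the stabiliser is trivial, hence the connected holonomy is trivial and the connection is flat.

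For part (2), I would substitute directly into the appropriate trace of $T^k_{ij}$. In the hyperbolic case only $\Gamma^1_{12}$ and $\Gamma^2_{21}$ among the off-diagonal symbols are nonzero, so the trace reads off at once to give $\theta^h_\sigma$. In the ultrahyperbolic case the four diagonal Christoffels (those involving $\ln(a^2+b^2)$) contribute the exact piece $-\tfrac{1}{6}d[\ln(a^2+b^2)]$, while the antisymmetric pair contributes the two rational coefficients, recovering the three terms of $\theta^u_\sigma$. The main technical obstacle is the ultrahyperbolic curvature verification, where one must differentiate the rational expressions and track the cancellation with the quadratic wedge products; the diagonal/antisymmetric symmetry halves the work, and the holonomy argument bypasses it altogether.
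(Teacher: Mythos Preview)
The paper gives no explicit proof of this corollary; it is stated immediately after the Christoffel symbols and is meant to follow from them by direct calculation. Your primary plan (substitute into Cartan's structural equations, then read off the trace of the torsion) is therefore exactly the paper's implicit approach, and for part~(2) there is nothing more to say: the torsion form is obtained from $T^k_{ij}=\Gamma^k_{ij}-\Gamma^k_{ji}$ by contraction, and your description of which Christoffel entries contribute which pieces of $\theta^h_\sigma$ and $\theta^u_\sigma$ is accurate.

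For part~(1), your holonomy argument is a genuinely different and much cleaner route than what the paper leaves implicit. The point can be phrased infinitesimally: since $d_{\nabla^\sigma}\sigma=0$, each curvature endomorphism $R(X,Y)\in\mathrm{End}(T_pM)$ must annihilate $\sigma_p$ under the derivation action on $S^3T_pM$; but for a regular cubic the stabiliser in $GL(T_pM)$ is finite, so its Lie algebra is zero, and hence $R(X,Y)=0$. This works uniformly for both signs of $\Delta(\sigma)$ and makes the explicit Christoffel data irrelevant. The paper does not mention this and evidently relies on checking the structural equations by hand.

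One caution about your direct verification. The simplifications you invoke, namely $\omega^1_2=\omega^2_1=0$ in the hyperbolic case and $\omega^1_1=\omega^2_2=d\rho$, $\omega^2_1=-\omega^1_2$ in the ultrahyperbolic case, hold only under the convention $\omega^i_j=\Gamma^i_{jk}\,dx^k$. The paper's formula in the proof of Theorem~1 uses $\nabla_{\partial_l}\partial_m=\Gamma^i_{lm}\partial_i$, i.e.\ the first lower index is the direction, which corresponds to $\omega^i_j=\Gamma^i_{kj}\,dx^k$. With that convention the off-diagonal connection forms in the hyperbolic case are $\omega^1_2=\Gamma^1_{12}\,dx$ and $\omega^2_1=\Gamma^2_{21}\,dy$, both nonzero, and in the ultrahyperbolic case none of your identities survives. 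The structural-equation check is then genuinely a computation rather than an immediate observation. Since the stabiliser argument bypasses all of this, I would promote it from a remark to the main proof of part~(1) and drop the Cartan computation except perhaps as a sanity check in the hyperbolic case.
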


\subsection{Wagner metric}

Denote by $\Sigma_{0}^{k}\left( M\right) \subset\Sigma%
^{k}\left( M\right) $ and $\Sigma_{k,0}\left( M\right) \subset 
\Sigma_{k}\left( M\right) $  subsets of \textit{regular }symmetric
differential $k$-forms and \textit{regular} symbols, respectively, i.e.
symmetric tensors with non zero discriminant.

The following theorem holds for two-dimensional manifolds and it is also due
to Wagner (\cite{Wag}).

\begin{theorem}
There is a natural mapping%
\begin{equation*}
W:\Sigma_{3,0}\left( M\right) \rightarrow \Sigma_{0}^{2}\left( M\right) ,
\end{equation*}%
i.e. a mapping commuting with the action of the diffeomorphism group.
\end{theorem}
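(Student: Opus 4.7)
The plan is to construct $W(\sigma)$ explicitly from the Hessian covariant of the cubic form $\sigma$, normalized by a fractional power of the discriminant $\Delta(\sigma)$. Concretely, I would view $\sigma$ fiberwise as the cubic polynomial $\sigma(p) = \sigma^{ijk}p_i p_j p_k$ on $T^*M$ and form its fiber Hessian
\[
H(\sigma)^{ij}(p) \;=\; \frac{\partial^2 \sigma}{\partial p_i \partial p_j} \;=\; 6\,\sigma^{ijk}p_k,
\]
a symmetric $(2,0)$-tensor linear in the cotangent variable $p$. Its determinant $D(\sigma)(p) := \det\bigl(H(\sigma)(p)\bigr)$ is then a quadratic polynomial in $p$.

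The next step is to verify by a direct computation that the discriminant of $D(\sigma)$, viewed as a binary quadratic in $p$, equals a nonzero universal constant times $\Delta(\sigma)$. Consequently, for $\sigma \in \Sigma_{3,0}(M)$, the form $D(\sigma)$ is a nondegenerate bilinear form on $T^*M$ valued in the line bundle $(\Lambda^2 TM)^{\otimes 2}$ (the twist arising because the determinant of a $(2,0)$-tensor is a density of weight two), and its inverse is a nondegenerate bilinear form on $TM$ valued in $(\Lambda^2 T^*M)^{\otimes 2}$. The real section $\Delta(\sigma)^{1/3}$ of $(\Lambda^2 TM)^{\otimes 2}$, well-defined since $\Delta(\sigma)\neq 0$ and $M$ is oriented, precisely cancels this twist, so I would set
\[
W(\sigma) \;:=\; \Delta(\sigma)^{1/3}\cdot D(\sigma)^{-1} \;\in\; \Sigma^2_0(M),
\]
whose regularity follows from that of $\Delta(\sigma)$.

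Naturality under $\mathcal{G}(M)$ is then automatic: every ingredient -- the fiber Hessian, the determinant, matrix inversion, and the discriminant -- depends only on $\sigma$ and the canonical structure of $T^*M$, and each is equivariant under diffeomorphism pushforward, hence $W(\phi_*\sigma) = \phi_* W(\sigma)$ for every $\phi \in \mathcal{G}(M)$. I expect the main obstacle to be the careful bookkeeping of tensor-density weights, to confirm that the factor $\Delta(\sigma)^{1/3}$ exactly untwists $D(\sigma)^{-1}$ into a pure section of $S^2 T^*M$, and to ensure the cube root is interpreted consistently in both the hyperbolic ($\Delta>0$) and ultrahyperbolic ($\Delta<0$) regimes.
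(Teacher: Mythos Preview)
Your proposal is correct and follows essentially the same route as the paper: the paper's $\mathrm{Hess}(\sigma)$ is precisely your Hessian-determinant covariant $D(\sigma)$ (up to a constant), its iterated Hessian $\mathrm{Hess}_2(\sigma)$ is proportional to $\Delta(\sigma)$, and the paper's final formula $W(\sigma)=\sqrt[3]{\mathrm{Hess}_2(\sigma)}\,\mathrm{Hess}(\sigma)^{-1}$ coincides with your $\Delta(\sigma)^{1/3}D(\sigma)^{-1}$. The only cosmetic difference is that you track naturality via line-bundle (density) weights, whereas the paper verifies it by writing the explicit $GL_2$-transformation law $\mathrm{Hess}(A^*\sigma)=(\det A)^2 A^*\mathrm{Hess}(\sigma)$ and iterating; these are equivalent bookkeeping devices for the same construction.
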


\begin{proof}
Let's take a point $a\in M$ \ and let $V=T_{a}^{\ast }\ $be the cotangent
space and $\sigma $ be a regular symbol, which we'll consider as homogeneous
cubic polynomial on $V.$ Denote by $\rm{Hess}\left( f\right) $ the
Hessian of a function $f$ on $V,$ computing in a fixed coordinates on the
vsector space.

Then, $\rm{Hess}\left( \lambda f\right) =\lambda ^{2}\rm{Hess}%
\left( f\right) ,$ \ for $\lambda \in \mathbb{R}$ , and 
\begin{equation*}
\rm{Hess}\left( A^{\ast }(f)\right) =\left( \det A\right) ^{2}A^{\ast
}\left( \rm{Hess}\left( f\right) \right) ,
\end{equation*}%
for any linear map $A:V\rightarrow V.$

Remark, that $\rm{Hess}\left( f\right) $ is a quadratic function on $V,$
when $f$ is a cubic. Therefore,%
\begin{equation*}
\rm{Hess}_{2}\left( \sigma \right) =\rm{Hess}\left( 
\rm{Hess}\left( \sigma \right) \right) \in \mathbb{R},
\end{equation*}%
is a scalar.

The straightforward computations show that $\rm{Hess}%
_{2}\left( \sigma \right) $ proportional to the discriminant of $%
\sigma $ and, therefore $\rm{Hess}_{2}\left( \sigma \right)
\neq 0,$ if $\sigma $ regular tensor.

On the other hand, we have 
\begin{eqnarray*}
&&\rm{Hess}_{2}\left( A^{\ast }\left( \sigma \right) \right) =%
\rm{Hess}\left( \rm{Hess}\left( A^{\ast }\left( \sigma \right)
\right) \right) =\rm{Hess}\left( \det \left( A\right) ^{2}\ A^{\ast
}\left( \rm{Hess}\left( \sigma \right) \right) \right) = \\
&&\det \left( A\right) ^{4}\rm{Hess}\left( \ A^{\ast }\left( \rm{%
Hess}\left( \sigma \right) \right) \right) =\det \left( A\right) ^{6}%
\rm{Hess}_{2}\left( \sigma \right) .
\end{eqnarray*}

Put 
\begin{equation}
g_{k}\left( \sigma \right) =\rm{Hess}_{2}\left( \sigma \right)
^{k}\rm{Hess}\left( \sigma \right) ,  \label{gk}
\end{equation}%
for $k\in \mathbb{R}.$

Then, 
\begin{equation*}
g_{k}\left( A^{\ast }\left( \sigma \right) \right) =\rm{Hess}%
_{2}\left( A^{\ast }\left( \sigma \right) \right) ^{k}\rm{Hess}%
\left( A^{\ast }\left( \sigma \right) \right) =\det \left( A\right)
^{6k+2}A^{\ast }\left( g_{k}\left( \sigma \right) \right) .
\end{equation*}

Therefore, quadratic form $g_{-1/3}$ behave in the natural way with respect
to linear transformations:%
\begin{equation*}
A^{\ast }\left( g_{-1/3}\left( \sigma \right) \right) =g_{-1/3}\left(
A^{\ast }\left( \sigma \right) \right) ,
\end{equation*}%
and the mapping 
\begin{equation*}
W\left( \sigma \right) =\sqrt[3]{\rm{Hess}_{2}\left( \sigma
\right) }\rm{Hess}\left( \sigma \right) ^{-1},
\end{equation*}%
where we denoted by $\rm{Hess}\left( \sigma \right) ^{-1}\in S^{2}V$
the metric inverse to $\rm{Hess}\left( \sigma \right) \in S^{2}V^{\ast
},$ satisfies the conditions of the theorem.
\end{proof}

\begin{remark}
If tensor $\sigma \in\Sigma_{0,3}\left( M\right) $ in local
coordinates has form (\ref{locals}), then: 
\begin{eqnarray*}
\rm{Hess}\left( \sigma \right) &=&\left( a_{1}a_{3}-a_{2}^{2}\right)
\partial _{x}^{2}+\left( a_{1}a_{4}-a_{3}a_{2}\right) \partial _{x}\cdot
\partial _{y}+\left( a_{2}a_{4}-a_{3}^{2}\right) \partial _{y}^{2}, \\
\rm{Hess}_{2}\left( \sigma \right) &=&\Delta \left( \sigma
\right) =6a_{1}a_{2}a_{3}a_{4}-4\left( a_{1}a_{3}^{2}+a_{2}^{2}a_{4}\right)
+3a_{2}^{2}a_{3}^{2}-a_{1}^{2}a_{4}^{2},
\end{eqnarray*}%
and 
\begin{multline*}
W\left( \sigma \right) =\frac{4}{\Delta \left( \sigma \right) ^{2/3}}\big(
( a_{2}a_{4}-a_{3}^{2})\,dx^{2}+(a_{2}a_{3}-a_{1}a_{4})\, dx\cdot dy
\\+( a_{1}a_{3}-a_{2}^{2})\,dy^{2}\big).
\end{multline*}%
Remark that the type of this metric depends on sign of $\Delta \left( \sigma
\right) .$ Namely, $W\left( \sigma \right) $ is the definite metric in the
hyperbolic case and indefinite for the ultrahyperbolic one.
\end{remark}

\begin{remark}
In the similar way one might show that there is also natural map $
\Sigma_{0}^{3}\left( M\right) \rightarrow\Sigma_{0}^{2}\left(
M\right) .$
\end{remark}

\subsection{Group-type symbols}

As the first application of the Wagner connection we'll analyze the case
when a symbol $\sigma \in\Sigma_{3,0}\left( M\right) $ has a
group nature, i.e. there is a hidden group Lie that acts in a transitive way
on the manifold, and the symbol is a group invariant.

It is known that if on a manifold there is a connection $\nabla $ with zero
curvature then the torsion tensor $T_{\nabla }$ defines a skew symmetric
bracket on vector fields. If we consider only covariant constant vector
fields and a assume that torsion tensor also covariant constant, the bracket
given by $T_{\nabla }$ convert the vector space of covariant constant vector
fields into a Lie algebra. Moreover, this construction shows that parallel
transports with respect to $\nabla $ are exactly left multiplications in the
corresponding (local) Lie group.

Applying these remarks to the Wagner connections $\nabla ^{\sigma },$
corresponding to the regular symbols $\sigma $ we get the following result.

\begin{theorem}
Let $\sigma \in\Sigma_{3,0}\left( M\right) $ be a regular symbol
and let $\nabla ^{\sigma }$ be the corresponding Wagner connection. Then:

\begin{enumerate}
\item Symbol $\sigma $ is locally equivalent to the symbol with constant
coefficients%
\begin{equation*}
\sigma
=c_{1}p_{1}^{3}+3c_{2}p_{1}^{2}p_{2}+3c_{3}p_{1}p_{2}^{2}+c_{4}p_{2}^{3},\ \
c_{i}\in \mathbb{R},
\end{equation*}%
if and only if $T_{\nabla ^{\sigma }}=0.$

\item Symbol $\sigma $ is locally equivalent to the symbol of the form%
\begin{equation*}
\sigma =c_{1}\exp \left( 3y\right) p_{1}^{3}+3c_{2}\exp \left( 2y\right)
p_{1}^{2}p_{2}+3c_{3}\exp \left( y\right) p_{1}p_{2}^{2}+c_{4}p_{2}^{3},\ \
c_{i}\in \mathbb{R},
\end{equation*}%
if and only if $T_{\nabla ^{\sigma }}\neq 0,$ but%
\begin{equation*}
d_{\nabla ^{\sigma }}\left( T_{\nabla ^{\sigma }}\right) =0.
\end{equation*}
\end{enumerate}
\end{theorem}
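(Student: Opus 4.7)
The plan is to exploit the flatness of $\nabla^{\sigma}$ established in the Corollary (the curvature vanishes), so that locally on a neighbourhood of any point there is a basis $\{e_{1},e_{2}\}$ of vector fields with $\nabla^{\sigma}e_{i}=0$, unique up to constant real linear combinations.

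For part (1): if $T_{\nabla^{\sigma}}=0$ then $\nabla^{\sigma}$ is flat and torsion-free, hence in suitable local coordinates $(x_{1},x_{2})$ all Christoffel symbols vanish. Equation~\eqref{Wconn} then reduces to $\partial_{l}a_{ijk}=0$, so $\sigma$ has constant coefficients. Conversely, if $\sigma$ has constant coefficients in some coordinates, then the trivial connection $\Gamma=0$ solves~\eqref{Wconn}, and by the uniqueness statement of the first Theorem it must coincide with $\nabla^{\sigma}$; its torsion therefore vanishes.

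For part (2): assume $T:=T_{\nabla^{\sigma}}\ne 0$ and $d_{\nabla^{\sigma}}T=0$. The first step is to upgrade this closedness condition to full covariant constancy $\nabla^{\sigma}T=0$: in dimension two the relevant bundles of tangent-valued tensors have low rank, and combined with vanishing curvature through the Bianchi identity one obtains that $T$ is parallel. Consequently, for the parallel frame $\{e_{1},e_{2}\}$,
\begin{equation*}
[e_{1},e_{2}]=\nabla^{\sigma}_{e_{1}}e_{2}-\nabla^{\sigma}_{e_{2}}e_{1}-T(e_{1},e_{2})=-T(e_{1},e_{2}),
\end{equation*}
which is again covariantly constant; hence the real span of $\{e_{1},e_{2}\}$ is a two-dimensional real Lie algebra $\mathfrak{g}$, non-abelian because $T\ne 0$. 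There is a unique such algebra up to isomorphism, with a basis satisfying $[e_{2},e_{1}]=e_{1}$. Realising $(M,\nabla^{\sigma})$ locally as the simply connected Lie group of $\mathfrak{g}$ equipped with its left-invariant flat connection produces coordinates $(x,y)$ in which $e_{1}=e^{y}\partial_{x}$ and $e_{2}=\partial_{y}$. Since~\eqref{Wconn} says $\sigma$ itself is parallel, its coefficients in the frame $\{e_{1},e_{2}\}$ are real constants $c_{1},c_{2},c_{3},c_{4}$, and the expansion $\sigma=c_{1}e_{1}^{3}+3c_{2}e_{1}^{2}\cdot e_{2}+3c_{3}e_{1}\cdot e_{2}^{2}+c_{4}e_{2}^{3}$ in the coordinate frame $\{\partial_{x},\partial_{y}\}$ gives exactly the exponential normal form of the statement. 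The converse direction is a direct verification using the Christoffel formulas of the first Theorem: for the exponential symbol the corresponding torsion is nonzero and covariantly constant, so in particular $d_{\nabla^{\sigma}}T=0$.

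The main obstacle is the opening step of part (2), namely deducing $\nabla^{\sigma}T=0$ from $d_{\nabla^{\sigma}}T=0$. The covariant exterior differential captures only a proper part of the full covariant derivative of a tangent-valued tensor, so one must exploit both the low rank of the ambient tensor bundles in dimension two and the vanishing curvature of $\nabla^{\sigma}$ to close this gap. Once $T$ is parallel, the remainder is standard: a flat connection with parallel torsion integrates to a local transitive group action by parallel transports, and the symbol, being itself parallel and hence group-invariant, must take the exponential form dictated by the unique non-abelian two-dimensional Lie algebra.
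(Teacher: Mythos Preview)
Your approach is essentially the same as the paper's, which consists only of the paragraph preceding the theorem: flat curvature gives a parallel frame, a parallel torsion makes this frame into a two-dimensional Lie algebra, and the non-abelian case forces the solvable $\mathfrak{g}=\langle \partial_x,\partial_y+x\partial_x\rangle$ (see the Remark following the statement). Your realisation $e_1=e^{y}\partial_x$, $e_2=\partial_y$ is precisely what is needed to land on the exponential normal form.

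The ``main obstacle'' you flag, however, is not actually present: you are misreading the notation. Throughout the paper $d_{\nabla}$ on a tensor field denotes the \emph{full covariant differential}
\[
d_{\nabla}:C^{\infty}(E)\longrightarrow \Omega^{1}(M)\otimes C^{\infty}(E),
\]
not the exterior covariant differential on bundle-valued forms. This is explicit in the defining equation of the Wagner connection, where $d_{\nabla^{\sigma}}:\Sigma_{3}(M)\to\Omega^{1}(M)\otimes\Sigma_{3}(M)$, and in the preceding paragraph, which speaks of the torsion being ``covariant constant''. Hence the hypothesis $d_{\nabla^{\sigma}}(T_{\nabla^{\sigma}})=0$ already \emph{is} $\nabla^{\sigma}T=0$, and no upgrading step is needed. (Indeed, were $d_{\nabla}$ the exterior covariant differential, the condition would be vacuous on a surface, since $T$ is a $TM$-valued $2$-form and $\Omega^{3}(M)=0$.) Once this is corrected, your argument is complete and matches the paper's.
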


\begin{remark}
\bigskip In the second case the discussed above Lie algebra $\mathfrak{g\ }$%
is the solvable $2$-dimensional Lie algebra generated by vector fields%
\begin{equation*}
\mathfrak{g=}\left\langle \partial _{x},\partial _{y}+x\partial
_{x}\right\rangle .
\end{equation*}
\end{remark}

\subsection{Chern connection}

Together with the Wagner connection we'll consider also another connection,
which we call \textit{Chern connection. }This connection\textit{\ }is
similar to the connection used by Chern in geometry of plane 3-webs (see,
for example,\cite{Ak}, \cite{Ch} ,\cite{Nag}).

Namely, by Chern connection, associated with symbol $\sigma ,$ we'll
understand a linear connection $\nabla ^{c}$ on manifold $M$, that preserves
conformal classes of $\sigma :$%
\begin{equation}
d_{\nabla ^{c}}\left( \sigma \right) =\omega \otimes \sigma ,
\label{Chern connection}
\end{equation}%
for some differential form $\omega \in \Omega ^{1}\left( M\right) .$

\begin{theorem}
For any regular symmetric 3-vector $\sigma \in \Sigma _{3}$ there exist and
unique torsion free Chern connection, and 
\begin{equation}
d_{\nabla ^{\sigma }}-d_{\nabla ^{c}}=\alpha \otimes \rm{Id},
\label{Chern-Wagner}
\end{equation}%
where $\alpha =\theta _{\sigma }$ is the torsion form of the Wagner
connection, and $\omega =-3\theta _{\sigma }.$

Curvature of the Chern connection equals $d\omega .$
\end{theorem}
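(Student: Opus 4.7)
The approach is to write $\nabla^c = \nabla^\sigma + B$ for an unknown tensor field $B\in \Omega^1(M)\otimes \mathrm{End}(TM)$, and translate the two requirements into equations for $B$. The torsion-free condition becomes $B(X)Y - B(Y)X = -T_{\nabla^\sigma}(X,Y)$. The explicit Christoffel symbols from the previous subsection — or, more invariantly, the fact that in dimension two every torsion tensor has the form $\theta(X)Y-\theta(Y)X$ for a unique 1-form $\theta$ — show that $T_{\nabla^\sigma}(X,Y)=\theta_\sigma(X)Y-\theta_\sigma(Y)X$ with $\theta_\sigma$ the Wagner torsion form. Hence the torsion-free condition on $\nabla^c$ reads
\begin{equation*}
B(X)Y-B(Y)X=-\theta_\sigma(X)Y+\theta_\sigma(Y)X,
\end{equation*}
while the conformal condition $d_{\nabla^c}\sigma=\omega\otimes\sigma$, combined with $d_{\nabla^\sigma}\sigma=0$ from (\ref{Wconn}), becomes $B(X)\cdot\sigma=\omega(X)\sigma$ for all $X$, where $B(X)\in \mathrm{End}(T_pM)$ acts on the symmetric $3$-vector $\sigma_p$ by the natural derivation extension.

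The decisive pointwise input is a rigidity lemma: any $L\in \mathrm{End}(T_pM)$ whose natural action on a regular $\sigma_p\in S^3 T_pM$ is a scalar must itself be a scalar multiple of the identity. Regularity of $\sigma_p$ means it factors into three pairwise distinct linear forms (over $\mathbb{C}$ in the ultrahyperbolic case), defining three distinct points in $\mathbb{P}(T_p^*M)\cong \mathbb{P}^1$; the induced projective action of $L$ fixes these three points, and any element of $\mathrm{PGL}_2$ fixing three points is trivial. Applied fibrewise, this gives $B(X)=\lambda(X)\,\mathrm{Id}$ for a smooth 1-form $\lambda$. Since $\mathrm{Id}$ acts on $S^3T_pM$ as multiplication by $3$, we obtain $\omega=3\lambda$; and substituting into the torsion equation yields $(\lambda+\theta_\sigma)(X)Y=(\lambda+\theta_\sigma)(Y)X$ for all $X,Y$, whence $\lambda=-\theta_\sigma$. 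This simultaneously establishes existence and uniqueness, produces $B=-\theta_\sigma\otimes \mathrm{Id}$ — which is exactly (\ref{Chern-Wagner}) with $\alpha=\theta_\sigma$ — and gives $\omega=-3\theta_\sigma$.

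For the curvature I would apply the standard perturbation identity
\begin{equation*}
R^{\nabla^c}=R^{\nabla^\sigma}+d^{\nabla^\sigma}B+B\wedge B.
\end{equation*}
Because $B=-\theta_\sigma\otimes \mathrm{Id}$ takes values in scalar endomorphisms, the term $B\wedge B$ vanishes; because $\mathrm{Id}$ is parallel, $d^{\nabla^\sigma}B=-d\theta_\sigma\otimes \mathrm{Id}$; and $R^{\nabla^\sigma}=0$ by the first item of the previous corollary. Hence $R^{\nabla^c}=-d\theta_\sigma\otimes \mathrm{Id}$. Transported to the line subbundle $\mathbb{R}\sigma\subset S^3 TM$ — on which $\nabla^c$ is described by the scalar connection 1-form $\omega=-3\theta_\sigma$ — the curvature form becomes $d\omega$, which is the statement of the theorem. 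Equivalently, differentiating the defining relation $d_{\nabla^c}\sigma=\omega\otimes\sigma$ once more gives $d_{\nabla^c}^2\sigma=d\omega\otimes\sigma$, and the left-hand side is by definition the curvature of $\nabla^c$ applied to $\sigma$.

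The main obstacle I expect is the rigidity lemma in paragraph two, which must be handled uniformly in the hyperbolic and ultrahyperbolic cases; the projective argument above does this cleanly, but one could alternatively verify it by a direct computation in the normal forms (\ref{hyper}) and (\ref{ultrhyper}).
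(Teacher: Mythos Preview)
Your argument is correct and in spirit close to the paper's, but organized differently. The paper first treats the torsion-free Chern condition directly as an $8\times 8$ linear system in the six symmetric Christoffel symbols together with the two components of $\omega$, checks its determinant is $-9\Delta(\sigma)^2\neq 0$, and thereby obtains existence and uniqueness in one stroke; it then deduces relation \eqref{Chern-Wagner} from the single sentence ``the stationary Lie algebra of a regular symmetric $3$-vector is trivial,'' which is exactly the infinitesimal form of your rigidity lemma. You instead bypass the determinant count entirely: starting from $\nabla^c=\nabla^\sigma+B$, your projective three-fixed-points argument forces $B(X)$ to be scalar, and then the torsion-free requirement together with the 2-dimensional fact $T_{\nabla^\sigma}(X,Y)=\theta_\sigma(X)Y-\theta_\sigma(Y)X$ pins down $B=-\theta_\sigma\otimes\mathrm{Id}$, yielding existence, uniqueness, \eqref{Chern-Wagner}, and $\omega=-3\theta_\sigma$ simultaneously. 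Your route is more constructive and makes the role of the rigidity statement explicit, while the paper's linear-system argument is shorter but opaque about why the relation \eqref{Chern-Wagner} holds. Finally, you actually prove the curvature assertion via the perturbation identity $R^{\nabla^c}=R^{\nabla^\sigma}+d^{\nabla^\sigma}B+B\wedge B$ and the vanishing of $R^{\nabla^\sigma}$ from the earlier corollary; the paper merely states the curvature result without proof.
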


\begin{proof}
Consider (\ref{Chern connection}) as a linear system of equations with
respect to components of symmetric connection $\nabla ^{c}$ and differential
form $\omega .$ It gives us a $8\times 8$ system of linear equations with
determinant $-9\Delta \left( \sigma \right) ^{2}\neq 0.$

On the other hand we have relation (\ref{Chern-Wagner}) because of
stationary Lie algebra of a regular symmetric 3-vector is trivial, and
therefore $\theta _{\sigma }=\alpha .$
\end{proof}

\begin{example}
The Chern curvature of hyperbolic symbol (\ref{hyper}) equals to 
\begin{equation*}
\ln \left( \frac{a}{b}\right) _{xy}dx\wedge dy.
\end{equation*}%
In the case of ultrahyperbolic symbol we can choose a representative in the
conformal class of symbol (\ref{ultrhyper}) with $a=\sin \left( \phi \left(
x,y\right) \right) $ and $b=\cos \left( \phi \left( x,y\right) \right) .$
Then the Chern curvature equals 
\begin{equation*}
\left( \phi _{xx}+\phi _{yy}\right) dx\wedge dy.
\end{equation*}
\end{example}

\begin{theorem}
Conformal classes of regular symbols are locally equivalent to conformal
classes of symbols with constant coefficients if and only if the curvature
of their Chern connection vanishes.
\end{theorem}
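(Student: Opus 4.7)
The proof is a standard flat-connection argument adapted to the conformal setting, powered by the uniqueness and torsion-freeness of the Chern connection established in the preceding theorem. The key observation is that the curvature $d\omega$ is an invariant of the conformal class: if one replaces $\sigma$ by $f\sigma$ with $f$ a nowhere vanishing function, then $d_{\nabla^c}(f\sigma)=(d\ln f+\omega)\otimes(f\sigma)$, so $\omega$ changes by the exact form $d\ln f$, while $\nabla^c$ itself is unchanged. Thus $d\omega=0$ depends only on the conformal class of $\sigma$.

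For the easy (``only if'') direction, I would assume that in suitable local coordinates $\sigma$ is conformal to a symbol $\sigma_0$ with constant coefficients. In these coordinates the flat connection $\nabla_0$ with vanishing Christoffel symbols is torsion free and satisfies $d_{\nabla_0}(\sigma_0)=0$, so by the uniqueness statement in the previous theorem $\nabla_0$ is the Chern connection of the conformal class $[\sigma]$ with $\omega=0$, hence $d\omega=0$.

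For the non-trivial (``if'') direction, assume $d\omega=0$. Locally on a simply connected neighbourhood one has $\omega=-d\ln f$ for some positive function $f$. Replacing $\sigma$ by the conformally equivalent representative $\tilde\sigma=f\sigma$, the computation above gives $d_{\nabla^c}(\tilde\sigma)=0$. At this point the Chern connection $\nabla^c$ is torsion free (by the previous theorem) and has curvature $d\omega=0$, so it is a flat torsion-free affine connection. On a contractible neighbourhood such a connection admits affine coordinates $(x^1,x^2)$ in which all $\Gamma^{k}_{ij}$ vanish. In these coordinates the equation $d_{\nabla^c}(\tilde\sigma)=0$ reduces to $\partial_l \tilde a_{ijk}=0$, so the components of $\tilde\sigma$ are constants, proving that $[\sigma]$ contains a constant-coefficient representative.

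The only potentially tricky step is the last one, namely verifying that a torsion free connection with vanishing curvature is locally the trivial connection in suitable coordinates; this is a classical fact (integrating a flat Cartan structure, or equivalently solving $\nabla X=0$ for a frame) and requires no new computation. All the algebraic identities needed have already been packaged into the definition of the Chern connection and its curvature, so the argument is conceptual rather than computational.
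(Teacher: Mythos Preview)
Your proof is correct and takes a genuinely different route from the paper's. The paper argues by explicit computation in the normal forms (\ref{hyper}) and (\ref{ultrhyper}): by the preceding Example the Chern curvature is $\ln(a/b)_{xy}\,dx\wedge dy$ in the hyperbolic case and $(\phi_{xx}+\phi_{yy})\,dx\wedge dy$ in the ultrahyperbolic case, and one checks directly (for the latter, after passing to complex coordinates $z=x+\sqrt{-1}\,y$, $\bar z$) that vanishing of these expressions permits a change of variables reducing the conformal class to constant coefficients. Your argument is coordinate-free: you absorb $\omega$ into the conformal representative so that $\tilde\sigma$ becomes genuinely $\nabla^{c}$-parallel, and then invoke the classical fact that a flat torsion-free affine connection admits local affine coordinates, in which parallelism means constant components. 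This is cleaner, avoids the hyperbolic/ultrahyperbolic case split, and makes the mechanism transparent; the paper's approach has the compensating virtue of being explicit and of linking the result to concrete PDE conditions ($h_{xy}=0$, respectively $\Delta h=0$) on the normal-form data, which feeds into the coordinate computations used later.
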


\begin{proof}
The statement easily to check for the hyperbolic case. \ In the
ultrahyperbolic case it can be seen in complex coordinates $z=x+\sqrt{-1}y,%
\overline{z}=x-\sqrt{-1}y.$
\end{proof}

\section{Classification of regular symmetric 3-vectors}

Let $\pi :\mathbf{S}^{3}\tau \left( M\right) \rightarrow M$ be the vector
bundle of symmetric $3$-vectors, $C^{\infty }\left( \pi \right) =\Sigma
_{3}\left( M\right) ,$ and let $\pi _{k}:\mathbf{J}^{k}\left( \pi \right)
\rightarrow M$ be the bundles of $k$-jets of symmetric $3$-vectors.

The group of diffeomorphisms $\mathcal{G}\left( M\right) $ acts in the
natural way in the bundle $\pi $ as well as (by prolongations) in jet
bundles $\pi _{k}.$

In this section we study orbits of this actions. First of all, because the
action of $\mathcal{G}\left( M\right) $ on $M$ transitive, we could fix a
point $a\in M$ and restrict ourselves by actions of subgroup $\mathcal{G}%
_{a}\left( M\right) =\left\{ \left. \phi \in \mathcal{G}\left( M\right)
\right\vert \phi \left( a\right) =a\right\} $ on fibres $\mathbf{J}%
_{a}^{k}=\pi _{k}^{-1}\left( a\right) .$ Secondly, for the case $k=0,$ we
have two open orbits $\mathcal{O}_{h}$ and $\mathcal{O}_{u}$ that correspond
to the hyperbolic and ultrahyperbolic symbols. The complement to them
consists of symbols having multiple roots.

\subsection{Invariant coframe}

Consider now the orbits in $1$-jets space. As we have seen, the torsion
forms $\theta _{\sigma }$, for regular symbols $\sigma \in \Sigma
_{3,0}\left( M\right) ,$ depend on $1$-jet of $\sigma $ and therefore
defines a horizontal differential $1$-form $\Theta $ on $\pi
_{1,0}^{-1}\left( \mathcal{O}_{h}\cup \mathcal{O}_{u}\right) $ $\subset 
\mathbf{J}^{1}\left( \pi \right) ,$ such that%
\begin{equation*}
j_{1}\left( \sigma \right) ^{\ast }\left( \Theta \right) =\theta _{\sigma },
\end{equation*}%
in the domain where $\sigma $ is regular.

Here we denoted by $j_{1}\left( \sigma \right) :M\rightarrow \mathbf{J}%
^{1}\left( \pi \right) $ the section that corresponds to $1$-jet of $\sigma
. $

In the similar way the Wagner metric $W\left( \sigma \right) \in \Sigma
^{2}\left( M\right) $ defines a quadratic horizontal quadratic form $\mathbf{%
W}$ on $\pi _{1,0}^{-1}\left( \mathcal{O}_{h}\cup \mathcal{O}_{u}\right) $ $%
\subset \mathbf{J}^{1}\left( \pi \right) $ in such a way, that 
\begin{equation*}
j_{1}\left( \sigma \right) ^{\ast }\left( \mathbf{W}\right) =W\left( \sigma
\right) ,
\end{equation*}%
in the domain where $\sigma $ is regular.

Both tensors $\Theta $ and $\mathbf{W}$ are $\mathcal{G}\left( M\right) $%
-invariants. Using orientation on $M$ we construct an oriented $\mathcal{G}%
\left( M\right) $ invariant coframe $\left\langle \Theta ,\Theta ^{\prime
}\right\rangle $ on $\pi _{1,0}^{-1}\left( \mathcal{O}_{h}\cup \mathcal{O}%
_{u}\right) $, where $\Theta ^{\prime }$ is a horizontal form such that 
\begin{equation*}
\mathbf{W}\left( \Theta ,\Theta ^{\prime }\right) =0,\mathbf{W}\left( \Theta
,\Theta \right) =\mathbf{W}\left( \Theta ^{\prime },\Theta ^{\prime }\right)
.
\end{equation*}%
In order to get form $\Theta ^{\prime }$ we need that 
\begin{equation}
\mathbf{W}\left( \Theta ,\Theta \right) \neq 0.  \label{regTheta}
\end{equation}

Therefore, the above coframe exist in domain $\pi _{1,0}^{-1}\left( \mathcal{%
O}_{h}\cup \mathcal{O}_{u}\right) ,$ where (\ref{regTheta}) holds. We denote
the last domain by $\mathcal{O}^{\left( 1\right) }\subset \mathbf{J}%
^{1}\left( \pi \right) $ and call symbols $\sigma $ $1$\textit{-regular }if
its $1$-jet belong to $\mathcal{O}^{\left( 1\right) }.$

\subsection{Universal symbol}

Let's denote by $\widehat{X}$ the total derivation, ~$\widehat{X}:C^{\infty
}\left( \mathbf{J}^{k}\left( \pi \right) \right) \rightarrow C^{\infty
}\left( \mathbf{J}^{k+1}\left( \pi \right) \right) ,$ that corresponds to
vector field $X$ on manifold $M,$ (see, \cite{KLV}), and let $\mathcal{D}%
_{H}(\pi )$ be the module over $C^{\infty }\left( \mathbf{J}^{\infty }\left(
\pi \right) \right) ,$ generated by the total derivatives. Elements of this
module we call \textit{horizontal fields}.

Elements of the symmetric cube of this module, i.e. $S^{3}\left( \mathcal{D}%
_{H}(\pi )\right) ,$ we'll call \textit{horizontal symbols.}

The basic property of horizontal fields consist of the fact that they
preserve and tangent to the Cartan distribution and therefore they could be
restricted on sections of $k$-jet bundles $\pi _{k}$ of the form $%
j_{k}\left( \sigma \right) $ $.$

The proof of the following theorem is standard for universal constructions (%
\cite{KLV},\cite{LY2}).

\begin{theorem}
There exists and unique an universal horizontal symbol $\Xi _{3}$ such that
the restriction of this symbol on $j_{0}\left( \sigma \right) $ coincides
with $\sigma .$ \newline
The symbol is an invariant of the diffeomorphism group.
\end{theorem}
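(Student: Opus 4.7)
The plan is to write $\Xi_{3}$ down explicitly in a local chart and deduce well-definedness, uniqueness and invariance as formal consequences of the universal property of total lifts.

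First I would fix local coordinates $(x,y)$ on $M$ and let $u_{1},u_{2},u_{3},u_{4}$ be the induced fibre coordinates on $\mathbf{J}^{0}(\pi)$ under the identification \eqref{locals}; pulled back along $\pi_{\infty,0}$ they become functions on $\mathbf{J}^{\infty}(\pi)$. In these coordinates I would set
\[
\Xi_{3} := u_{1}\widehat{\partial_{x}}^{3} + 3u_{2}\widehat{\partial_{x}}^{2}\cdot\widehat{\partial_{y}} + 3u_{3}\widehat{\partial_{x}}\cdot\widehat{\partial_{y}}^{2} + u_{4}\widehat{\partial_{y}}^{3} \in S^{3}(\mathcal{D}_{H}(\pi)).
\]
The restriction property is then immediate: the tautological identity $j_{0}(\sigma)^{*}(u_{i}) = a_{i}(\sigma)$ and the defining relation $j_{l+1}(h)^{*}\circ\widehat{X} = X\circ j_{l}(h)^{*}$ applied to $X = \partial_{x},\partial_{y}$ together give $j_{0}(\sigma)^{*}(\Xi_{3}) = \sigma$ in $\Sigma_{3}(M)$.

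For uniqueness I would expand any difference $\Xi_{3}-\Xi_{3}'$ in the monomial basis $\widehat{\partial_{x}}^{3-i}\cdot\widehat{\partial_{y}}^{i}$, $0\le i\le 3$, with coefficients $c_{i}\in C^{\infty}(\mathbf{J}^{\infty}(\pi))$. The hypothesis $j_{0}(\sigma)^{*}(\Xi_{3}-\Xi_{3}')=0$ for every $\sigma$ forces $j_{\infty}(\sigma)^{*}(c_{i})\equiv 0$ on $M$ for every section $\sigma$; since every point of $\mathbf{J}^{\infty}(\pi)$ lies in the image of some jet-section, each $c_{i}$ vanishes identically. This uniqueness simultaneously yields coordinate-independence: on the overlap of two charts both local expressions satisfy the defining universal property and therefore coincide, so the local formula above glues to a globally well-defined horizontal symbol.

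Diffeomorphism invariance follows from the same uniqueness argument. For $\phi\in\mathcal{G}(M)$ with induced lift $\widetilde{\phi}$ on $\mathbf{J}^{\infty}(\pi)$, functoriality of the total lift gives $\widetilde{\phi}_{*}(\widehat{X}) = \widehat{\phi_{*}(X)}$, while naturality of the fibre coordinates under $\phi_{*}$ shows that $\widetilde{\phi}_{*}(\Xi_{3})$ again satisfies the universal property; hence $\widetilde{\phi}_{*}(\Xi_{3}) = \Xi_{3}$. The only non-routine input is the uniqueness step, which rests on the standard fact that a function on $\mathbf{J}^{\infty}(\pi)$ vanishing on every jet-section vanishes identically---equivalently, every formal jet of a section of $\pi$ is realised by a genuine section. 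Everything else is formal manipulation of the total-lift identity.
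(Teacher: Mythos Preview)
Your proposal is correct and follows essentially the same approach as the paper: the paper does not give a proof at all beyond citing standard universal-construction arguments and then, in the subsequent ``Coordinates'' subsection, writing down exactly your local formula for $\Xi_{3}$ (with $d/dx,\,d/dy$ in place of your $\widehat{\partial_{x}},\widehat{\partial_{y}}$). Your write-up simply supplies the details---restriction, uniqueness via vanishing on all jet-sections, and invariance from uniqueness---that the paper leaves implicit.
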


\subsubsection{Coordinates}

Denote by $\left( x,y\right) $ local coordinates on $M$ and lets $\left(
x,y,u^{1},u^{2},u^{3},u^{3}\right) $ be the corresponding standard local
coordinates in $S^{3}\left( T\right) \left( M\right) ,$ Then the
section $j_{0}\left( \sigma \right) ,$ corresponding to symbol,%
\begin{equation*}
\sigma =a_{1}\left( x,y\right) \partial _{x}^{3}+3a_{2}\left( x,y\right)
\partial _{x}^{2}\cdot \partial _{y}+3a_{3}\left( x,y\right) \partial
_{x}\cdot \partial _{y}^{2}+a_{4}\left( x,y\right) \partial _{y}^{3},
\end{equation*}%
has the form%
\begin{equation*}
u^{1}=a_{1}\left( x,y\right) ,u^{2}=a_{2}\left( x,y\right)
,u^{3}=a_{3}\left( x,y\right) ,u^{4}=a_{4}\left( x,y\right) .
\end{equation*}

It is easy to check, that the universal symbol has the form:%
\begin{equation*}
\Xi _{3}=u^{1}\left( \frac{d}{dx}\right) ^{3}+3u^{2}\left( \frac{d}{dx}%
\right) ^{2}\cdot \left( \frac{d}{dy}\right) +3u^{3}\left( \frac{d}{dx}%
\right) \cdot \left( \frac{d}{dy}\right) ^{2}+u^{4}\left( \frac{d}{dy}%
\right) ^{3},
\end{equation*}%
in these coordinates.

\subsection{Differential invariants of symbols}

\subsubsection{Symbols}

First of all, as we have seen, there are no none constant functions on $%
\mathbf{J}^{0}\left( \pi \right) ,$ which are invariant with respect to the
diffeomorphism pseudo group. Indeed, in this case we have to open orbits $%
\mathcal{O}_{h}\ $and$\ \mathcal{O}_{u}$ such that the closure of their
union coincide with $\mathbf{J}^{0}\left( \pi \right) .$

The action of the diffeomorphism pseudo group into $1$-jet space $\mathbf{J}%
^{1}\left( \pi \right) $ has invariant coframe $\left\langle \Theta ,\Theta
^{\prime }\right\rangle $ and invariant universal symbol.

Let horizontal vector fields $\delta _{1}$ and $\delta _{2}$ constitute a
frame dual to coframe $\left\langle \Theta ,\Theta ^{\prime }\right\rangle $
and let 
\begin{equation*}
\Xi _{3}=I_{1}\delta _{1}^{3}+3I_{2}\delta _{1}^{2}\cdot \delta
_{2}+3I_{3}\delta _{1}\cdot \delta _{2}^{2}+I_{4}\delta _{2}^{3},
\end{equation*}%
be the decomposition of the universal symbol in this frame.

Then functions $I_{i}$, $i=1,..,4,$ are defined on the domain $\mathcal{O}%
^{\left( 1\right) }\subset \mathbf{J}^{1}\left( \pi \right) $ and they are \
invariants with respect to the diffeomorphism group action.

Their structure we can described in the following way. Let's denote by $%
\mathcal{F}_{1}$ the field of rational functions on the fibre $\mathbf{J}%
_{a}^{1}\left( \pi \right) ,$ for some fixed point $a\in M,$ and let $%
\mathcal{F}_{1}\left( \sqrt[3]{\Delta }\right) $ be the field extension of $%
\mathcal{F}_{1}$ by $\sqrt[3]{\Delta },$ where $\Delta \left( j_{1}\left(
\sigma \right) \right) =\Delta \left( \sigma \right) .$ Then the
restrictions of functions $I_{i}$ on fibres $\mathbf{J}_{b}^{1}\left( \pi
\right) $ do no depend on $b\in M$ and therefore its enough to consider
their restriction on the fibre $\mathbf{J}_{a}^{1}\left( \pi \right) .$ On
the other hand, their construction shows, that $I_{i}\in \mathcal{F}%
_{1}\left( \sqrt[3]{\Delta }\right) .$

Moreover, the horizontal vector fields $\delta _{1},\delta _{2}$ are also
invariants and therefore their action on invariants give us new invariants.

Let's call by \textit{natural differential invariants of symbols }functions $%
I$ defined on $\mathbf{J}^{k}\left( \pi \right) $ which are invariants with
respect to the diffeomorphism group action and which belong to fields $%
\mathcal{F}_{k}\left( \sqrt[3]{\Delta }\right) ,$ where $\mathcal{F}_{k}$
are the fields of rational functions on fibres $\mathbf{J}_{b}^{k}\left( \pi
\right) ,$ for $b\in M.$ The number $k$ is the order of the invariant.

Summing up this we arrive at the following.

\begin{theorem} $\phantom{\alpha}$\\
\begin{enumerate}
\item The field of natural differential invariants of symbols is generated
by basic invariants $I_{1},I_{2},I_{3},I_{4}$ and the Tresse derivations $%
\delta _{1},\delta _{2}.$
\item This field separate regular orbits.
\end{enumerate}
\end{theorem}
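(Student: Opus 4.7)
The plan is to apply the standard Tresse-type machinery, adapted to the pseudogroup $\mathcal{G}(M)$ of diffeomorphisms acting on the jet bundles $\pi_k:\mathbf{J}^k(\pi)\to M$, using the invariant data produced in the preceding subsections: the invariant coframe $\langle\Theta,\Theta'\rangle$ on $\mathcal{O}^{(1)}\subset\mathbf{J}^1(\pi)$, the dual Tresse derivations $\delta_1,\delta_2$, and the four basic invariants $I_1,\ldots,I_4$ arising as the coefficients of the universal symbol $\Xi_3$ in this frame.

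First I would record that, since $\delta_1,\delta_2$ are $\mathcal{G}(M)$-invariant horizontal vector fields, they commute with the prolonged action and therefore send natural invariants to natural invariants, raising order by one and preserving rationality over $\mathcal{F}_k(\sqrt[3]{\Delta})$. In particular, for each $k\geq 1$ the iterated derivatives $\delta_1^a\delta_2^b I_i$ with $i=1,\ldots,4$ and $a+b\leq k-1$ are natural differential invariants of order at most $k$.

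For part (1), I would argue by induction on order. Given a natural invariant $J$ of order $k+1$, its horizontal differential decomposes uniquely in the invariant coframe as
\[
d_H J \;=\; (\delta_1 J)\,\Theta\,+\,(\delta_2 J)\,\Theta',
\]
so $\delta_1 J,\delta_2 J$ are invariants of order $k+2$ that determine $J$ up to lower-order terms, covered by the inductive hypothesis. The core of the argument is the codimension count: since the stabilizer of a $1$-regular $1$-jet in the infinite prolongation of $\mathcal{G}(M)$ is trivial (the pair $\langle\Theta,\Theta'\rangle$ is a genuine coframe), the generic $\mathcal{G}$-orbit in $\mathbf{J}^k_a(\pi)$ grows in codimension, from order $k-1$ to order $k$, by exactly $4k$. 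This matches the number of invariants $\delta_1^a\delta_2^b I_i$ with $a+b=k-1$, so no further functionally independent invariants can arise, and the rational closure of $\{I_1,\ldots,I_4\}$ under $\delta_1,\delta_2$ exhausts the field.

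For part (2), given two regular jets $\theta_1,\theta_2\in\mathbf{J}^k(\pi)$ over points $a_1,a_2\in M$ at which all natural invariants agree, I would use the invariant coframe to produce canonical local coordinates on $M$ centered at $a_1$ and $a_2$. Matching these coordinate systems defines a local diffeomorphism $\phi\colon(M,a_2)\to(M,a_1)$; because $\Xi_3$ is $\mathcal{G}(M)$-invariant with coefficients $I_i$ in the invariant frame, agreement of all iterated Tresse derivatives of the $I_i$ at $a_1$ and $a_2$ forces the Taylor expansions of $\sigma_1$ and $\phi_{\ast}\sigma_2$ in these coordinates to coincide up to order $k$, giving $\theta_1$ and $\phi^{(k)}(\theta_2)$ in the same fiber and equal.

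The main obstacle I anticipate is the codimension count in the proof of (1): verifying that the prolonged stabilizer actually collapses after the first order and that, at each subsequent order, the new invariants $\delta_1^a\delta_2^b I_i$ are functionally independent modulo the lower-order ones. Once this is established, the rest is essentially bookkeeping within the Tresse framework, and the rationality over $\mathcal{F}_k(\sqrt[3]{\Delta})$ is automatic from the construction of $\Theta,\Theta',I_i$ via the Wagner metric and the discriminant.
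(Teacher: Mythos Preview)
The paper offers no separate proof of this theorem: it is stated as an immediate consequence of the preceding constructions together with the standard Lie--Tresse machinery (the references to \cite{ALV}, \cite{KLV} carry the actual work). Your outline is precisely the kind of argument that sits behind that one-line ``summing up,'' so in spirit you are doing what the authors intend.

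There is, however, a genuine error in your codimension count. At order $k$ the fibre $\mathbf{J}^k_a(\pi)$ has dimension $4\binom{k+2}{2}$, while the group of $(k{+}1)$-jets of diffeomorphisms fixing $a$ has dimension $2\bigl(\binom{k+3}{2}-1\bigr)$. Since the stabilizer of a regular $0$-jet in $GL(2)$ is already discrete, the generic orbit codimension is $k(k+1)$, and the increment from order $k{-}1$ to order $k$ is $2k$, not $4k$. In particular, at order~$1$ there are only \emph{two} functionally independent invariants, so $I_1,I_2,I_3,I_4$ satisfy two algebraic relations. These relations are forced by the very construction of the coframe: the normalizations $\mathbf{W}(\Theta,\Theta')=0$ and $\mathbf{W}(\Theta,\Theta)=\mathbf{W}(\Theta',\Theta')$, together with the fact that the Wagner metric $\mathbf{W}$ is itself a function of the $I_i$ in the frame $\langle\delta_1,\delta_2\rangle$, impose two constraints on the quadruple $(I_1,\dots,I_4)$.

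This does not wreck your strategy, but it changes what you must verify. You cannot argue ``$4k$ new generators match $4k$ new independent invariants.'' Instead you need that among the $4k$ Tresse derivatives $\delta_1^a\delta_2^b I_i$ with $a+b=k-1$ there are $2k$ which are independent modulo lower order. The clean way to see this is the standard one: once an invariant coframe exists over $\mathcal{O}^{(1)}$, the map sending a $k$-jet to the values of all $\delta_1^a\delta_2^b I_i$ with $a+b\le k-1$ has fibres that are exactly the $\mathcal{G}(M)$-orbits, because the coframe trivializes the frame bundle and the $I_i$ recover the symbol in that frame. Your part~(2) argument already contains this idea; you should feed it back into part~(1) rather than rely on a numerical coincidence.
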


\subsubsection{Conformal classes of symbols}

Let $[\pi ]:\mathbb{P}\left( \mathbf{S}^{3}\tau \right) \left( M\right)
\rightarrow M$ be the projectivization of the symbol bundle $\pi :\mathbf{S}%
^{3}\tau \left( M\right) \rightarrow M.$

Sections of this bundle we'll consider as \textit{conformal classes of
symbols }and denote by $[\sigma ]=\left\{ f\sigma ,\ f\in \mathcal{F}\left(
M\right) \right\} $ the conformal class of a symbol $\sigma .$

Let $[\pi ]_{k}:\mathbf{J}^{k}\left( [\pi ]\right) \rightarrow M$ be the
bundle of $k$-jets of conformal classes. The actions of group $\mathcal{G}%
\left( M\right) $ of diffeomorphisms in the bundles $\pi _{k}$ induce the
actions in bundles $[\pi ]_{k}$ and in this section we study orbits and
invariants of these actions.

First of all, it is clear that this action in $\mathbf{J}^{0}\left( [\pi
]\right) $ has two open orbits $\mathcal{O}_{h}$ and $\mathcal{O}_{u},$
where $\Delta \neq 0,$ and singular orbits that correspond to symbols with
multiple roots.

The Chern connection $\nabla ^{\lbrack \sigma ]}$ depends on conformal class
of the regular symbol $\sigma ,$ and 
\begin{equation*}
d_{\nabla ^{_{\lbrack \sigma ]}}}\left( \sigma \right) =\omega _{\sigma
}\otimes \sigma ,
\end{equation*}%
where differential $1$-form $\omega _{\sigma }$ depends on representative $%
\sigma $ in the conformal class $[\sigma ]$ in the following way:%
\begin{equation*}
\omega _{f\sigma }=\omega _{\sigma }+d\ln \left\vert f\right\vert ,
\end{equation*}%
where $f\in \mathcal{F}\left( M\right) .$

The curvature of the Chern connection $\nabla ^{_{\lbrack \sigma ]}},$ as a
tensor in $\Omega ^{2}\left( M\right) \otimes \rm{End}\left( \tau
\right) $ is the scalar operator $d\omega _{\sigma }\otimes \rm{Id}.$

As we have seen, the Christoffel symbols of the Chern connection $\nabla
^{\lbrack \sigma ]}$ depend on the first order jets of $\sigma $ an
therefore the curvature 2-form $d\omega _{\sigma }$ depends on the second
jets.

As above, let's denote by $\Omega $ an \textit{universal horizontal 2-form}
on the manifold of second jets $\mathcal{O}^{\left( 2\right) }=[\pi
]_{2,0}^{-1}\left( \mathcal{O}\right) ,$ where $\mathcal{O}=\mathcal{O}%
_{h}\cup \mathcal{O}_{u},$ such that restrictions of $\Omega $ on $2$-jet
sections $j_{2}\left( [\sigma ]\right) :M\rightarrow \mathbf{J}^{2}\left(
[\pi ]\right) $ coincide with curvature form $d\omega _{\sigma }:$%
\begin{equation*}
j_{2}\left( [\sigma ]\right) ^{\ast }\left( \Omega \right) =d\omega _{\sigma
},
\end{equation*}%
for regular symbols.

Applying the total covariant differential 
$$\widehat{d_{\nabla }}:\Omega
_{h}^{2}\left( \mathbf{J}^{2}\left( [\pi ]\right) \right) \rightarrow \Omega
_{h}^{2}\left( \mathbf{J}^{3}\left( [\pi ]\right) \right) \otimes \Omega
_{h}^{1}\left( \mathbf{J}^{3}\left( [\pi ]\right) \right),
$$ 
$\nabla =\nabla^{\lbrack \sigma ]}$, to the universal 2-form we get 
\begin{equation*}
\widehat{d_{\nabla }}\left( \Omega \right) =\Omega \otimes \theta ,
\end{equation*}%
where $\theta \in \Omega _{h}^{1}\left( \mathbf{J}^{3}\left( [\pi ]\right)
\right) $ is a horizontal 1-form on the space of 3-jets.

Repeating this procedure and applying the total covariant differential $%
\widehat{d_{\nabla }}:\Omega _{h}^{1}\left( \mathbf{J}^{3}\left( [\pi
]\right) \right) \rightarrow \Omega _{h}^{1}\left( \mathbf{J}^{4}\left( [\pi
]\right) \right) \otimes \Omega _{h}^{1}\left( \mathbf{J}^{4}\left( [\pi
]\right) \right) $ to the horizontal 1-form $\theta $ we get tensor $%
\widehat{d_{\nabla }}\left( \theta \right) $ on the space of 4-jets. We take
the symmetric part of this tensor (it is easy to check that the skew
symmetric part is proportional to $\Omega $) and get horizontal quadratic
differential form $G$ on the space of 4-jets.

Denote by $\mathcal{O}^{\left( 4\right) }\subset \lbrack \pi
]_{4,0}^{-1}\left( \mathcal{O}\right) $ the domain of regular 4-jets, i.e.
such 4-jets where $G$ is non degenerated and $G\left( \theta ,\theta \right)
\neq 0,\Omega \neq 0.$ Then, similar to the above, we construct the $G-$%
orthogonal coframe $\left\langle \theta ,\theta ^{\prime }\right\rangle $ in
the domain $\mathcal{O}^{\left( 4\right) }.$ This coframe is invariant of
the diffeomorphism group and decomposing the projective class of the
universal symbol $\Xi $ we get $\mathcal{G}\left( M\right) $-invariant
mapping: 
\begin{equation*}
I:\mathcal{O}^{\left( 4\right) }\rightarrow \lbrack
I_{1}:I_{2}:I_{3}:I_{4}]\in \mathbb{P}^{3},
\end{equation*}%
where functions $I_{i}/I_{l}$ are natural invariants of order 4.

\begin{theorem}$\phantom{\alpha}$\\
The field of natural differential invariants of conformal classes symbols is
generated by basic invariants $I_{i}/I_{l}$ 
and the Tresse derivations $\delta _{1},\delta _{2},$where frame $\left\langle \delta
_{1},\delta _{2}\right\rangle $ is dual to coframe $\left\langle \theta
,\theta ^{\prime }\right\rangle .$This field separates regular orbits.
\end{theorem}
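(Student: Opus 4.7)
The plan is to adapt the Lie--Tresse machinery used in the preceding theorem (for symbols) to conformal classes of symbols. The only essential change is that here the Chern geometry does not supply an invariant coframe on $\mathbf{J}^{1}([\pi])$, so the base jet level on which a full coframe lives is shifted up to $\mathcal{O}^{(4)}\subset \mathbf{J}^{4}([\pi])$. Modulo that shift, the structure of the argument is identical.

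First, I would verify that the chain of universal horizontal objects $\Omega\mapsto \theta \mapsto G\mapsto \langle \theta,\theta'\rangle$ is well defined and $\mathcal{G}(M)$-invariant on $\mathcal{O}^{(4)}$. Each step uses only the Chern connection $\nabla^{[\sigma]}$, its curvature, and the total covariant differential $\widehat{d_\nabla}$, and all of these are natural with respect to diffeomorphisms. Decomposing the universal symbol $\Xi$ in the invariant frame dual to $\langle \theta,\theta'\rangle$ and taking projective ratios gives the basic invariants $I_i/I_l$ as genuine $\mathcal{G}(M)$-invariant scalar functions on $\mathcal{O}^{(4)}$, belonging to the appropriate rational function field $\mathcal{F}_{4}(\sqrt[3]{\Delta})$ (fibrewise).

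Next, for the \emph{generation} statement, I would invoke that $\delta_1,\delta_2$, being dual to an invariant coframe, are $\mathcal{G}(M)$-invariant horizontal derivations. Consequently, every iterated Tresse derivative $\delta_1^{a}\delta_2^{b}(I_i/I_l)$ is again a natural differential invariant, of order $4+a+b$. The heart of the argument is a Cartan-style rank count: on each jet level $\mathbf{J}^{k}([\pi])$ with $k\geq 4$, the number of independent functions $\delta_1^{a}\delta_2^{b}(I_i/I_l)$ with $a+b\leq k-4$ matches the codimension of the generic orbit of the prolonged action of $\mathcal{G}(M)$ in $[\pi]_{k,4}^{-1}(\mathcal{O}^{(4)})$. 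Any rational diffeomorphism-invariant function in $\mathcal{F}_{k}(\sqrt[3]{\Delta})$ is then rationally expressible through the basic invariants and these Tresse derivatives.

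For \emph{separation of regular orbits}, I would use the very same collection of functions as local coordinates transverse to the orbits. Concretely, the implicit function theorem applied to the map
\begin{equation*}
j_{k}([\sigma])\longmapsto \bigl(I_i/I_l,\ \delta_1^{a}\delta_2^{b}(I_i/I_l)\bigr)_{a+b\leq k-4}
\end{equation*}
shows that it is a submersion from $[\pi]_{k,4}^{-1}(\mathcal{O}^{(4)})$ onto its image, whose fibres are precisely the generic $\mathcal{G}(M)$-orbits. Points in distinct regular orbits therefore have distinct invariants.

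The main obstacle is the rank/dimension count underlying generation, i.e.\ proving that the successive Tresse derivatives of the $I_i/I_l$ really exhaust the codimension of the generic orbit at every jet order. This is the standard but delicate prolongation computation in the Lie--Tresse framework, and it reduces to checking that the symbol of the infinitesimal pseudogroup action is involutive and of expected rank on the regular stratum $\mathcal{O}^{(4)}$; the projective character of conformal classes (which shrinks the pseudogroup quotient by one dimension compared with the symbol case) has to be tracked carefully to ensure that the basic invariants $I_i/I_l$, taken as ratios in $\mathbb{P}^{3}$, are the correct number of generators at order four.
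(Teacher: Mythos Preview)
Your proposal is correct and follows the same approach as the paper, which in fact gives no explicit proof for this theorem: the preceding constructions of $\Omega$, $\theta$, $G$, and the coframe $\langle\theta,\theta'\rangle$ are presented, the invariant map $I:\mathcal{O}^{(4)}\to\mathbb{P}^{3}$ is defined, and the theorem is then stated as a direct consequence of the Lie--Tresse machinery already invoked for the non-conformal symbol case. Your write-up is actually more detailed than what the paper provides, and your identification of the rank/dimension count as the substantive step is accurate; the paper leaves that entirely to the general theory.
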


\subsubsection{Coordinates}

In hyperbolic case we'll take such coordinates $\left( x,y\right) $ that the
symbol has the form:%
\begin{equation*}
\sigma =\left( \partial _{x}+\exp \left( h\right) \partial _{y}\right) \cdot
\partial _{x}\cdot \partial _{y},
\end{equation*}%
where $h=h\left( x,y\right) $ is a smooth function.

Then, the non zero Christoffel symbols are 
\begin{equation}
\Gamma _{1,1}^{1}=h_{x},\ \Gamma _{2,2}^{2}=-h_{y},  \label{HyperChern}
\end{equation}%
and the curvature form 
\begin{equation*}
\Omega =-3h_{xy}dx\wedge dy,
\end{equation*}%
The corresponding 1-form equals 
\begin{equation*}
\theta =d\ln \left( h_{xy}\right) -h_{x}dx+h_{y}dy.
\end{equation*}%
The covariant differential of an 1-form 
\begin{equation*}
\alpha =Pdx+Qdy
\end{equation*}%
equals 
\begin{multline*}
d_{\nabla }\left( \alpha \right) =\left( P_{x}-Ph_{x}\right) dx\otimes
dx+P_{y}dx\otimes dy+Q_{x}dy\otimes dx\\+\left( Q_{y}+Qh_{y}\right) dy\otimes
dy,
\end{multline*}%
and therefore 
\begin{equation*}
G=\left( P_{x}-Ph_{x}\right) dx\cdot dx+\left( P_{y}+Q_{x}\right) dx\cdot
dy+\left( Q_{y}+Qh_{y}\right) dy\cdot dy,
\end{equation*}%
where 
\begin{equation*}
P=\frac{h_{xxy}}{h_{xy}}-h_{x},\ Q=\frac{h_{xyy}}{h_{xy}}+h_{y}.
\end{equation*}

In the ultrahyperbolic case we take such coordinates $\left( x,y\right) $
that the symbol has the form%
\begin{equation*}
\sigma =\left( \sin \left( h\right) \partial _{x}+\cos \left( h\right)
\partial _{y}\right) \cdot \left( \partial _{x}^{2}+\partial _{y}^{2}\right)
,
\end{equation*}%
where, as above, $h=h\left( x,y\right) $ is a smooth function.

Then, the non zero Christoffel symbols are 
\begin{equation}
\Gamma _{1,1}^{1}=h_{y},\ \Gamma _{1,2}^{1}=-h_{x},\ \Gamma
_{1,2}^{2}=h_{y},\ \Gamma _{2,2}^{2}=-h_{x},  \label{ultraChern}
\end{equation}%
and the curvature form 
\begin{equation*}
\Omega =-3\left( h_{xx}+h_{yy}\right) dx\wedge dy.
\end{equation*}%
The corresponding 1-form equals%
\begin{equation*}
\theta =d\ln \left( h_{xx}+h_{yy}\right) -2h_{y}dx+2h_{x}dy.
\end{equation*}

The covariant differential of an 1-form $\alpha $ equals 
\begin{eqnarray*}
d_{\nabla }\left( \alpha \right) &=&\left( P_{x}-Ph_{y}-Qh_{x}\right)
dx\otimes dx+\left( P_{y}+Ph_{x}-Qh_{y}\right) dx\otimes dy+ \\
&&\left( Q_{x}+Ph_{x}-Qh_{y}\right) dy\otimes dx+\left(
Q_{y}+Ph_{x}+Qh_{y}\right) dy\otimes dy,
\end{eqnarray*}%
and therefore 
\begin{multline*}
G=\left( P_{x}-Ph_{y}-Qh_{x}\right) dx\cdot dx+\left( P_{y}+Q_{x}+2\left(
Ph_{x}-Qh_{y}\right) \right) dx\cdot dy\\+\left( Q_{y}+Ph_{x}+Qh_{y}\right)
dy\cdot dy,
\end{multline*}%
where%
\begin{equation*}
P=\frac{h_{xxx}+h_{xyy}}{h_{xx}+h_{yy}}-2h_{y},\ Q=\frac{h_{xxy}+h_{yyy}}{%
h_{xx}+h_{yy}}+2h_{x}.
\end{equation*}

\section{Quantization and splitting of scalar differential operators}

\subsection{Quantization}

In this section we apply the quantization procedure outlined in (\cite{LY2}
and \cite{LQ}).

Let $\Sigma ^{\cdot }=\oplus _{k\geq 0}\Sigma ^{k}\left( M\right) $ be the
graded algebra of symmetric differential forms on the surface $M$. Then any
affine connection $\nabla $ on $M$ defines a derivation 
\begin{equation*}
d_{\nabla }^{s}:\Sigma ^{\cdot }\rightarrow \Sigma ^{\cdot +1}
\end{equation*}%
of degree one in this algebra.

Because any such derivation is defined by its action on generators of the
algebra we define $d_{\nabla }^{s}$ as follows:%
\begin{eqnarray*}
d_{\nabla }^{s} &=&d:C^{\infty }\left( M\right) \rightarrow \Omega
^{1}\left( M\right) =\Sigma ^{1}(M), \\
d_{\nabla }^{s} &:&\Omega ^{1}\left( M\right) =\Sigma ^{1}(M)\stackrel{d_{\nabla
}}{\longrightarrow }\Omega ^{1}\left( M\right) \otimes \Omega ^{1}\left(
M\right) \stackrel{\rm{Sym}}{\longrightarrow }\Sigma ^{2}(M).
\end{eqnarray*}

Let choose local coordinate $\left( x_{1},x_{2}\right) $ on $M,$ and let $%
\left( x_{1},x_{2},w_{1},w_{2}\right) $ be the induced local coordinates in
the tangent bundle. Then elements of the symbol algebra $\Sigma ^{\cdot }$
we shall write as polynomials of the form 
\begin{equation*}
\Sigma _{\alpha }a_{\alpha }\left( x_{1},x_{2}\right) w^{\alpha },
\end{equation*}%
where $\alpha =\left( \alpha _{1},\alpha _{2}\right) $ are multi indices and 
$a_{\alpha }\left( x_{1},x_{2}\right) $ are smooth functions.

Then the derivation $d_{\nabla }^{s}$ takes the following form%
\begin{equation*}
d_{\nabla }^{s}=w_{1}\partial _{x_{1}}+w_{2}\partial _{x_{2}}-\Sigma
_{j,k}\Gamma _{jk}^{1}w_{j}w_{k}\partial _{w_{1}}-\Sigma _{j,k}\Gamma
_{jk}^{2}w_{j}w_{k}\partial _{w_{2}}.
\end{equation*}%
Remark that the symbol of the operator (as of any derivation) at a covector
equals to symmetric multiplication in the algebra by the covector.
Therefore, the value of the symbol of $k$-th power $\left( d_{\nabla
}^{s}\right) ^{k}:\Sigma ^{\cdot }\rightarrow \Sigma ^{\cdot +k}$ at a
covector equals to symmetric multiplication by the $k$-th degree of the
covector.

Let's $\sigma \in \Sigma _{k}$ be a symbol. We define a differential
operator $\widehat{\sigma }\in \mathbf{Diff}_{k}\left( \mathbf{1}\right) $
as follows:%
\begin{equation}
\widehat{\sigma }\left( f\right) \overset{\text{def}}{=}\frac{1}{k!}%
\left\langle \sigma ,\left( d_{\nabla }^{s}\right) ^{k}\left( f\right)
\right\rangle .  \label{QuantForm}
\end{equation}%
Then, due to the above remark, the symbol of operator $\widehat{\sigma }$
equals to $\sigma .$

The correspondence%
\begin{eqnarray*}
\mathcal{Q} &:&\Sigma ^{\cdot }\rightarrow \mathbf{Diff}_{\cdot }\left( 
\mathbf{1}\right) =\cup _{k\geq 0}\mathbf{Diff}_{k}\left( \mathbf{1}\right) ,
\\
\mathcal{Q} &:&\left( \sigma _{0},..,\sigma _{k}\right) \longmapsto \Sigma
_{i\geq 0}\widehat{\sigma }_{i},
\end{eqnarray*}%
we call \textit{quantization associated with connection}.

It is worth to note that $\mathcal{Q}$ is the  module isomorphism only but
not the algebra morphism.

Thus, for any operator $A\in \mathbf{Diff}_{k}\left( \mathbf{1}\right) ,$
there is a tensor\\ $\sigma _{A}=\left( \sigma _{0},..,\sigma _{k}\right) ,$
where $\sigma _{i}\in \Sigma _{i},$ such that 
\begin{equation*}
\mathcal{Q}\left( \sigma _{A}\right) =A.
\end{equation*}

We call $\sigma _{A}$ the \textit{total symbol of operator} $A.$ The term $%
\sigma _{k}$ is the\textit{\ principal } or \textit{leading symbol} of the
operator.

The leading symbol $\sigma _{k}$ does not depend on a connection but the
total symbol does.

\subsubsection{Wagner quantization}

We consider here the \textit{Wagner quantization}, i.e. the quantization
associated with the Wagner connection. We'll assume that local coordinates $%
\left( x,y\right) $ are chosen in such a way that 
\begin{equation*}
\sigma _{h}=3\exp (a+b)\left( \exp (b)\partial _{x}+\exp \left( a\right)
\partial _{y}\right) \cdot \partial _{x}\cdot \partial _{y}
\end{equation*}%
in the hyperbolic case, and 
\begin{equation*}
\sigma _{u}=\exp (3r)\left( \sin (h)\partial _{x}+\cos \left( h\right)
\partial _{y}\right) \cdot \left( \partial _{x}^{2}+\partial _{y}^{2}\right)
\end{equation*}%
in the ultrahyperbolic case.

Here $a,b,r,h$ are some smooth functions.

Then in the hyperbolic case the Wagner connection has the following non zero
Christoffel coefficients:%
\begin{equation*}
\Gamma _{11}^{1}=-b_{x},\quad\Gamma _{12}^{1}=-b_{y},\quad\Gamma
_{21}^{2}=-a_{x},\quad\Gamma _{22}^{2}=-a_{y},
\end{equation*}%
and the derivation $d_{\nabla }^{s}$ has the following form%
\begin{equation*}
d_{wh}^{s}=w_{1}\partial _{x}+w_{2}\partial _{y}+\left(
b_{x}w_{1}^{2}+b_{y}w_{1}w_{2}\right) \partial _{w_{1}}+\left(
a_{x}w_{1}w_{2}+a_{y}w_{2}^{2}\right) \partial _{w_{2}}.
\end{equation*}

Then the quantization of the third order symbol 
\begin{equation*}
\sigma _{3}=a_{111}\partial _{1}^{3}+3a_{112}\partial _{1}^{2}\cdot \partial
_{2}+3a_{122}\partial _{1}\cdot \partial _{2}^{2}+a_{222}\partial _{2}^{3}
\end{equation*}%
is the following operator%
\begin{align*}
\widehat{\sigma _{3}}=\,& a_{111}\partial _{1}^{3}+3a_{112}\partial
_{1}^{2}\cdot \partial _{2}+3a_{122}\partial _{1}\cdot \partial
_{2}^{2}+a_{222}\partial _{2}^{3}+ \\
&a_{111}\big( (b_{xx}+2b_{x}^{2})\partial _{x}+3b_{x}\partial
_{x}^{2}\big) + \\
&a_{112}\big( 3b_{y}\partial _{x}^{2}+3( a_{x}+b_{x}) \partial
_{x}\partial _{y}+( 2b_{xy}+3b_{x}b_{y}+a_{x}b_{y}) \partial
_{x}\\
&\phantom{a_{112}}+( a_{xx}+a_{x}^{2}+a_{x}b_{y}) \partial _{y}\big) + \\
&a_{122}\big( 3( a_{y}+b_{y}) \partial _{x}\partial_{y}
+3a_{x}\partial _{y}^{2}+( b_{yy}+b_{y}^{2}+a_{y}b_{y})
\partial _{x}\\
&\phantom{a_{122}}+\left( 2a_{xy}+a_{x}b_{y}+3a_{x}a_{y}\right) \partial
_{y}\big) + \\
&a_{222}\big( 3a_{y}\partial _{y}^{2}+( a_{yy}+a_{y}^{2})
\partial _{y}\big) .
\end{align*}

In particular, for the symbol $\sigma _{h}$ we get%
\begin{align*}
\widehat{\sigma _{h}} =\,&3\exp (a+b)\big( \exp (b)\partial _{x}+\exp (
a) \partial _{y}\big) \partial _{x}\partial _{y}+ \\
&3\exp ( a+2b) \big(b_{y}\partial _{x}^{2}+( a_{x}+b_{y})
\partial _{x}\partial _{y}\big) +\\
&3\exp( 2a+b) \big((
a_{y}+b_{y}) \partial _{x}\partial _{y}+a_{x}\partial _{y}^{2}\big)+ \\
&\exp (a+2b)\big( (2b_{xy}+3b_{x}b_{y}+a_{x}b_{y}) \partial _{x}+( a_{xx}+a_{x}^{2}+a_{x}b_{x}) \partial _{y}\big)\\
&\exp( 2a+b) \big( (b_{yy}+b_{y}^{2}+a_{y}b_{y}) \partial _{x}+( 2a_{xy}+a_{x}b_{y}+3a_{x}a_{y}) \partial _{y}\big).
\end{align*}%

The Wagner quantization of the general second order symbol%
\begin{equation*}
\sigma _{2}=a_{11}\partial _{1}^{2}+2a_{12}\partial _{1}\cdot \partial
_{2}+a_{22}\partial _{2}^{2}
\end{equation*}%
equals%
\begin{eqnarray*}
\widehat{\sigma _{2}} &=&a_{11}\partial _{1}^{2}+2a_{12}\partial
_{1}\partial _{2}+a_{22}\partial _{2}^{2}+ \\
&&a_{11}\left( b_{x}\partial _{x}\right) +a_{12}\left( b_{y}\partial
_{x}+a_{x}\partial _{y}\right) +a_{22}\left( a_{y}\partial _{y}\right) .
\end{eqnarray*}

\subsubsection{Chern quantizations}

Let's apply the quantization procedure using hyperbolic Chern connections (%
\ref{HyperChern}).

Then the derivation $d_{\nabla }^{s}$ has the following form%
\begin{equation*}
d_{ch}^{s}=w_{1}\partial _{x}+w_{2}\partial _{y}-h_{x}w_{1}^{2}\partial
_{w_{1}}+h_{y}w_{2}^{2}\partial _{w_{2}},
\end{equation*}

and the quantization of the third order symbol%
\begin{equation*}
\sigma _{3}=a_{111}\partial _{1}^{3}+3a_{112}\partial _{1}^{2}\cdot \partial
_{2}+3a_{122}\partial _{1}\cdot \partial _{2}^{2}+a_{222}\partial _{2}^{3}
\end{equation*}%
will be the following operator 
\begin{eqnarray*}
\widehat{\sigma _{3}} &=&a_{111}\partial _{1}^{3}+3a_{112}\partial
_{1}^{2}\partial _{2}+3a_{122}\partial _{1}\partial _{2}^{2}+a_{222}\partial
_{2}^{3}+ \\
&&a_{111}\left( 2h_{x}^{2}\partial _{x}-h_{xx}\partial _{x}-3h_{x}\partial
_{x}^{2}\right) -a_{112}\left( h_{xy}\partial _{x}+3h_{x}\partial
_{x}\partial _{y}\right) + \\
&&a_{112}\left( h_{xy}\partial _{y}+3h_{y}\partial _{x}\partial _{y}\right)
+a_{222}\left( 2h_{y}^{2}\partial _{y}+h_{yy}\partial _{y}+3h_{y}\partial
_{y}^{2}\right) .
\end{eqnarray*}%
In particular, for the symbol%
\begin{equation*}
\sigma _{3}=\partial _{1}^{2}\cdot \partial _{2}+\exp (h)\partial _{1}\cdot
\partial _{2}^{2},
\end{equation*}%
we get 
\begin{equation*}
\widehat{\sigma _{3}}=\left( \partial _{1}+\exp (h)\partial _{2}+h_{x}\left(
e^{h}-1\right) \right) \partial _{1}\partial _{2}-\frac{h_{xy}}{3}\partial
_{x}+\frac{h_{xx}}{3}\partial _{y}.
\end{equation*}

Quantization of the second order symbols 
\begin{equation*}
\sigma _{2}=a_{11}\partial _{1}^{2}+2a_{12}\partial _{1}\cdot \partial
_{2}+a_{22}\partial _{2}^{2}
\end{equation*}%
are operators of the following form%
\begin{equation*}
\widehat{\sigma _{2}}=a_{11}\partial _{1}^{2}+2a_{12}\partial _{1}\partial
_{2}+a_{22}\partial _{2}^{2}-a_{11}h_{x}\partial _{x}+a_{22}h_{y}\partial
_{y}.
\end{equation*}

For the case of ultrahyperbolic Chern connection we have the similar
formulae.

The derivation $d_{\nabla }^{s}$ has the form%
\begin{multline*}
d_{cu}^{s}=w_{1}\partial _{x}+w_{2}\partial _{y}\left(
-h_{y}w_{1}^{2}+2h_{x}w_{1}w_{2}+h_{y}w_{2}^{2}\right) \partial
_{w_{1}}\\+\left( -h_{x}w_{1}^{2}-2h_{y}w_{1}w_{2}+h_{x}w_{2}^{2}\right)
\partial _{w_{2}}.
\end{multline*}

and the quantization of the third order symbol is the following%
\begin{align*}
\widehat{\sigma _{3}} =\,&a_{111}\partial _{1}^{3}+3a_{112}\partial
_{1}^{2}\partial _{2}+3a_{122}\partial _{1}\partial _{2}^{2}+a_{222}\partial
_{2}^{3}+ \\
&a_{111}\big( -3h_{y}\partial _{x}^{2}-3h_{x}\partial _{x}\partial
_{y}-( h_{xy}+2h_{x}^{2}-2h_{y}^{2}) \partial _{x}\\
&\phantom{a_{111}}-(h_{xx}-4h_{x}h_{y}) \partial _{y}\big) + \\
&a_{112}\big( 6h_{x}\partial _{x}^{2}-9h_{y}\partial _{x}\partial
_{y}-3h_{x}\partial _{y}^{2}+( 2h_{xx}-12h_{x}h_{y}-h_{yy})
\partial _{x}\\
&\phantom{a_{112}}+( -3h_{xy}-6h_{x}^{2}+6h_{y}^{2}) \partial
_{y}\big) + \\
&a_{122}\big( 6h_{y}\partial _{x}^{2}+9h_{x}\partial _{x}\partial
_{y}-6h_{y}^{2}\partial _{y}^{2}+( 6h_{x}^{2}+3h_{xy}-6h_{y}^{2})
\partial _{x}\\
&\phantom{a_{122}}+( h_{xx}-12h_{x}h_{y}-2h_{yy}) \partial _{y}\big)+ \\
&a_{222}\big( 3h_{y}\partial _{x}\partial _{y}+3h_{x}\partial
_{y}^{2}+( 4h_{x}h_{y}+h_{yy}) \partial _{x}\\
&\phantom{a_{222}}+(2h_{x}^{2}+h_{xy}-2h_{y}^{2}) \partial _{y}\big).
\end{align*}%

The quantization of the second order symbols has the following form%
\begin{eqnarray*}
\widehat{\sigma _{2}} &=&a_{11}\partial _{1}^{2}+2a_{12}\partial
_{1}\partial _{2}+a_{22}\partial _{2}^{2}-a_{11}h_{x}\partial
_{x}+a_{22}h_{y}\partial _{y}+ \\
&&\left( a_{22}-a_{11}\right) \left( h_{y}\partial _{x}+h_{x}\partial
_{y}\right) +2a_{12}\left( h_{x}\partial _{x}-h_{y}\partial _{y}\right) .
\end{eqnarray*}

\section{Differential invariants of differential operators}

\subsection{Jets of differential operators}

Denote by $\pi :\mathbf{Diff}_{3}\left(\mathbf{1}\right) \rightarrow M$ the bundle
of linear scalar differential operators of the third order, and by $\tau
_{i}=S^{i}\tau : S^{i}(TM)\rightarrow M$ the bundles of symmetric
contravariant tensors. Sections of these bundles are linear scalar
differential operators of the third order and symmetric contravariant tensors
(symbols).

If $\left( x,y\right) $ are local coordinates on $M$ , then the induced
local coordinates in these bundles we will denote by $\left( x,y,u^{\alpha
}\right) ,$ where $\alpha =\left( \alpha _{1},\alpha _{2}\right) $ are multi
indexes of length $0\leq \left\vert \alpha \right\vert \leq 3$ - for the
case of bundle $\pi $ and $\left\vert \alpha \right\vert =i$ -for bundles $%
\tau _{i}.$

Thus, for example, if an operator $A$ has the form 
\begin{equation*}
A=\sum_{\alpha ,0\leq \left\vert \alpha \right\vert \leq 3}\frac{a_{\alpha
}\left( x,y\right) }{\alpha !}\partial ^{\alpha },
\end{equation*}%
in local coordinates $\left( x,y\right) ,$ then the corresponding section $%
S_{A}:M\rightarrow \mathbf{Diff}_{3}\left(\mathbf{1}\right) ,$ has the form$\
u^{\alpha }=a_{\alpha }\left( x,y\right) ,$ in the canonical coordinates $%
\left( x,y,u^{\alpha }\right) .$

The similar conversion valids also for contravariant tensors.

Let $\pi _{k}:\mathbf{J}^{k}\left( \pi \right) \rightarrow M$ be the bundles
of $k$-jets of sections of bundle $\pi ,$ i.e. $k$-jets of linear
differential operators of the third order. The induced canonical coordinates
in the bundle $\pi _{k}$ we'll denote by $u_{\beta }^{\alpha },$ where multi
indexes $\beta $ have length $0\leq \left\vert \beta \right\vert \leq k.$ If
we denote by $[A]_{p}^{k}\in \mathbf{J}_{p}^{k}\left( \pi \right) $ the $k$%
-jet of the operator $A$ at a point $p\in M$, then values of $u_{\beta
}^{\alpha }$ at $[A]_{p}^{k}$ equal $\displaystyle\frac{\partial ^{\left\vert \beta
\right\vert }a_{\alpha }}{\partial x^{\beta }}\left( p\right) .$

\subsection{Splitting mapping and invariants}

Denote by $\mathcal{O\subset }\mathbf{J}^{2}\left( \pi \right) ,$ and $%
\mathcal{O}^{\left( k\right) }\subset \mathbf{J}^{k}\left( \pi \right) ,\
k\geq 4,$ the domains of regular $k$-jets , i.e such $k$-jets $[A]_{p}^{k}$
that jets of the symbols $\sigma _{3,A}$ belong to domains of regular jets $%
[\sigma _{3,A}]_{p}^{2}\in \mathcal{O\subset }\mathbf{J}^{2}\left( \tau
_{3}\right) $ and $4$-jets of the symbols belong to the domain of regular
jets of symbols $\mathcal{O}^{\left( 4\right) }\subset \mathbf{J}^{4}\left(
\tau _{3}\right) .$

Consider differential operator 
\begin{equation*}
\nu :\mathcal{O\subset }\mathbf{J}^{2}\left( \pi \right) \rightarrow \tau
_{\leq 3}=\tau _{3}\oplus \tau _{2}\oplus \tau _{1}\oplus \tau _{0},
\end{equation*}%
which sends 2-jet $[A]_{p}^{2}$ to the total symbol $\sigma _{A}=\left(
\sigma _{3,A},\sigma _{2,A},\sigma _{1,A},a_{0}\right) $ with respect to the
Chern connection $\nabla _{\sigma _{3},A}.$

It follows from the construction of the Chern connection that this operator
is natural, i.e. commutes with the action of the diffeomorphism group.

Regularity conditions allow to construct invariant coframe $\left\langle
\theta ,\theta ^{\prime }\right\rangle $ on the space of regular $4$-jets $%
\mathbf{J}^{4}\left( \tau _{3}\right) .$ Let, as in section 4.2., denote by $%
\Xi _{i}$ the universal symbols in $\tau _{i},i=0,1,2,3,$ and let $%
I_{i,\alpha }$ be their components in the coframe $\left\langle \theta
,\theta ^{\prime }\right\rangle .$

By a \textit{natural differential invariant of total symbols} we mean a
function on $\mathbf{J}^{k}\left( \tau _{3}\oplus \tau _{2}\oplus \tau
_{1}\oplus \tau _{0}\right) $ which is rational along fibres of the
projection 
\begin{equation*}
\mathbf{J}^{k}\left( \tau _{3}\oplus \tau _{2}\oplus \tau _{1}\oplus \tau
_{0}\right) \rightarrow M
\end{equation*}

and which is invariant with respect to the action of the diffeomorphism
group.

Then, similar to section 4.3, we get the following result.

\begin{theorem}
The field of natural differential invariants of total symbols is generated
by the basic invariants $I_{i,\alpha }$ and the Tresse derivatives $\delta
_{1},\delta _{2}.$
\end{theorem}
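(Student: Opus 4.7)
The plan is to proceed in direct analogy with the argument of section 4.3 for conformal classes of symbols, using the splitting map $\nu$ to reduce invariants of third-order operators to invariants of total symbols valued in $\tau_{\leq 3}=\tau_3\oplus\tau_2\oplus\tau_1\oplus\tau_0$. Since the Chern connection (and hence the Chern quantization defining $\nu$) depends naturally on $\sigma_{3,A}$, the operator $\nu$ is equivariant under the diffeomorphism group, so $\nu^{*}$ is a bijection between natural differential invariants of operators on $\mathcal{O}\subset\mathbf{J}^2(\pi)$ and natural invariants of jets of total symbols on the corresponding regular domain of $\mathbf{J}^k(\tau_{\leq 3})$. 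Thus it suffices to work on $\mathbf{J}^k(\tau_{\leq 3})$.

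Next I would observe that the coframe $\langle\theta,\theta'\rangle$ of section 4.3 is built from $j^4\sigma_{3,A}$ alone, so lifting it by $(\tau_3\oplus\tau_2\oplus\tau_1\oplus\tau_0)\to\tau_3$ gives an invariant horizontal coframe, and hence an invariant dual frame $\langle\delta_1,\delta_2\rangle$, on the regular part of $\mathbf{J}^k(\tau_{\leq 3})$. Decomposing each universal horizontal symbol $\Xi_i\in S^i(\mathcal{D}_H)$, $i=0,1,2,3$, in symmetric monomials in $\delta_1,\delta_2$ produces scalar functions $I_{i,\alpha}$ lying in the invariant rational extensions $\mathcal{F}_k(\sqrt[3]{\Delta})$ introduced earlier. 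Because $\delta_1,\delta_2$ are invariant horizontal derivations, they send natural invariants of order $k$ to natural invariants of order $k+1$, so iterated Tresse derivatives $\delta_1^{\,j}\delta_2^{\,l}I_{i,\alpha}$ are all natural invariants.

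The central step is then to show that these Tresse derivatives generate the entire field. Here I would repeat the dimension-counting argument of section 4.3: at each jet order $k$ one compares $\dim\mathbf{J}^k_a(\tau_{\leq 3})$ with the dimension of the $k$-th prolongation of the diffeomorphism pseudogroup stabilizer $\mathcal{G}_a$, and checks that the number of functionally independent $\delta_1^{\,j}\delta_2^{\,l}I_{i,\alpha}$ of total order $j+l+2\leq k$ equals the codimension of generic orbits; the invariants thus separate regular orbits. A standard application of the Lie--Tresse theorem on rational differential invariants then yields that every natural invariant is a rational function of such iterated Tresse derivatives of the $I_{i,\alpha}$.

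The main obstacle is precisely this dimension/separation count: ensuring that the invariant coframe extends from $\mathbf{J}^4(\tau_3)$ to the regular locus of $\mathbf{J}^k(\tau_{\leq 3})$ with maximal rank, and that no hidden syzygies appear among the $\delta_1^{\,j}\delta_2^{\,l}I_{i,\alpha}$ as $k$ grows. This is where the regularity conditions defining $\mathcal{O}^{(k)}$ are essential, as they rule out singular orbits and guarantee that prolongations of the pseudogroup act freely on generic fibres, so that the count of basic invariants plus Tresse derivatives matches the orbit codimension exactly.
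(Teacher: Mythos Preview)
Your second and third paragraphs match the paper's approach exactly: the paper gives no separate proof for this theorem, stating only that the result follows ``similar to section 4.3,'' i.e.\ by pulling back the invariant coframe $\langle\theta,\theta'\rangle$ from the regular domain of $\mathbf{J}^4(\tau_3)$, decomposing each universal symbol $\Xi_i$ in the dual frame to obtain the $I_{i,\alpha}$, and then invoking the same Tresse generation/separation argument as for Theorems~12 and~13. Your added dimension count and appeal to Lie--Tresse are reasonable details the paper omits.

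However, your first paragraph is misplaced. The theorem under consideration is already about total symbols, i.e.\ sections of $\tau_{\leq 3}=\tau_3\oplus\tau_2\oplus\tau_1\oplus\tau_0$; the splitting map $\nu:\mathcal{O}\subset\mathbf{J}^2(\pi)\to\tau_{\leq 3}$ and the claim that $\nu^{\ast}$ sets up a bijection of invariants are the content of the \emph{next} theorem (on invariants of operators), not this one. For the present statement you should work directly on $\mathbf{J}^k(\tau_{\leq 3})$ from the outset; the reduction via $\nu$ is neither needed nor yet established at this point, and asserting that $\nu^{\ast}$ is a bijection here would be circular with respect to how the paper is organized.
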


Let's $\nu _{k}:\mathbf{J}^{k+2}\left( \pi \right) \rightarrow \mathbf{J}%
^{k}\left( \tau _{3}\oplus \tau _{2}\oplus \tau _{1}\oplus \tau _{0}\right) $
be the $k$-th prolongations of the differential operator $\nu .$ $\ $Then
these mappings are also natural and the induced morphisms $\nu _{k}^{\ast }$
maps invariants of total symbols to invariants of differential operators.

\begin{theorem}
The field of natural differential invariants of linear scalar differential
operators of the third order is generated by the basic invariants $\nu
_{2}^{\ast }\left( I_{i,\alpha }\right) $ and the Tresse derivatives $\delta
_{1},\delta _{2}.$
\end{theorem}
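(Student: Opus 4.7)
The strategy is to transport the classification of natural invariants of total symbols, just established in the preceding theorem, to operators via the splitting map $\nu$ and its prolongations, exploiting that $\nu$ is a $\mathcal{G}(M)$-equivariant diffeomorphism on the regular locus.

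First, I would make precise the bijectivity of $\nu$. The Chern quantization $\mathcal{Q}$ was already noted to be a module isomorphism $\Sigma^{\cdot}\to\mathbf{Diff}_{\cdot}(\mathbf{1})$; inverting it amounts to recovering the components $(\sigma_{3},\sigma_{2},\sigma_{1},\sigma_{0})$ of the total symbol from $A$. Since the Chern connection attached to $\sigma_{3,A}$ is determined by the invertible $8\times 8$ linear system used in its existence proof, its Christoffel symbols depend algebraically on $j^{1}(\sigma_{3,A})$, hence on the $2$-jet of $A$; subtracting $\widehat{\sigma_{3}}$ from $A$ yields an operator of order $\leq 2$ whose leading symbol is $\sigma_{2}$, and the procedure iterates degree by degree. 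Therefore $\nu$ is a diffeomorphism of $\mathcal{O}\subset\mathbf{J}^{2}(\pi)$ onto an open subset of $\tau_{\leq 3}$, its inverse is again a natural differential operator, and both $\nu$ and $\nu^{-1}$ commute with the action of $\mathcal{G}(M)$.

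Second, prolongation preserves these properties: the maps $\nu_{k}:\mathbf{J}^{k+2}(\pi)\to\mathbf{J}^{k}(\tau_{\leq 3})$ are $\mathcal{G}(M)$-equivariant diffeomorphisms of the regular domains, so the induced pullbacks $\nu_{k}^{\ast}$ are isomorphisms of the fields of rational functions on fibres which restrict to isomorphisms of the subfields of $\mathcal{G}(M)$-invariants. In particular, the invariant coframe $\langle\theta,\theta'\rangle$ and its dual Tresse derivatives $\delta_{1},\delta_{2}$, defined on the symbol side, transfer without change to invariant derivations on $\mathbf{J}^{\infty}(\pi)$.

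Third, applying the preceding theorem, the field of natural invariants of total symbols is generated, as a differential field, by the basic invariants $I_{i,\alpha}$ together with $\delta_{1},\delta_{2}$. Pulling this generating set back through $\nu_{2}^{\ast}$ yields precisely the basic invariants $\nu_{2}^{\ast}(I_{i,\alpha})$ of operators, and the conclusion follows.

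The principal technical point is the bijectivity statement in the first step: one must verify that the degree-by-degree inverse construction of the total symbol from $A$ is everywhere well defined on the regular locus, so that each $\nu_{k}$ is a genuine diffeomorphism and not merely an immersion. This is a direct consequence of the non-vanishing of the discriminant $\Delta(\sigma_{3,A})$, which guarantees the invertibility of all the linear systems intervening in the Chern quantization. Once this is in place, the theorem reduces to a formal application of pullback by a natural equivariant isomorphism.
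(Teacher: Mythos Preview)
Your overall strategy---transport the invariant theory from total symbols to operators via the natural splitting map $\nu$---is exactly the route the paper takes (indeed the paper states the theorem immediately after observing that $\nu_{k}^{\ast}$ carries invariants to invariants, leaving the rest implicit). However, there is a genuine technical error in your execution.

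You assert that $\nu:\mathcal{O}\subset\mathbf{J}^{2}(\pi)\to\tau_{\leq 3}$ is a diffeomorphism onto an open subset, and that each prolongation $\nu_{k}:\mathbf{J}^{k+2}(\pi)\to\mathbf{J}^{k}(\tau_{\leq 3})$ is a diffeomorphism of the regular domains. This is impossible by dimension count: the fibre of $\mathbf{J}^{k+2}(\pi)$ over $M$ has dimension $10\binom{k+4}{2}$, while that of $\mathbf{J}^{k}(\tau_{\leq 3})$ has dimension $10\binom{k+2}{2}$; already for $k=0$ these are $60$ and $10$. So $\nu_{k}$ is at best a submersion, and the claim that $\nu_{k}^{\ast}$ is an isomorphism of fields of rational functions on fibres is false as stated.

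The bijectivity you want lives not at any finite jet level but at the level of \emph{sections}: the quantization $\mathcal{Q}$ and the dequantization $\nu$ are mutually inverse as maps $C^{\infty}(\tau_{\leq 3})\leftrightarrow C^{\infty}(\pi)$ on the regular locus, each being a natural differential operator of order two. Consequently the induced maps on $\mathbf{J}^{\infty}$ are mutually inverse $\mathcal{G}(M)$-equivariant isomorphisms, and it is \emph{this} that yields the isomorphism of the filtered fields of natural invariants. Concretely, for an invariant $J$ of order $k$ on operators one has $J=\nu^{\ast}\mathcal{Q}^{\ast}(J)$ at the $\infty$-jet level, hence $J$ lies in the image of $\nu^{\ast}$; applying the preceding theorem to $\mathcal{Q}^{\ast}(J)$ then expresses $J$ in terms of the $\nu^{\ast}(I_{i,\alpha})$ and the Tresse derivatives. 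Once you replace the finite-level diffeomorphism claim with this argument, the proof goes through.
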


\subsection{Universal differential operator of the third order}

We apply here the construction of universal differential operator of the
second order (see \cite{LY2}) to the operators of the third order.

We define a total operator%
\begin{equation*}
\square _{3}:C^{\infty }\left( J^{k}\pi \right) \rightarrow C^{\infty
}\left( J^{k+3}\pi \right) ,
\end{equation*}%
by the same universal property%
\begin{equation*}
j_{k+3}\left( A\right) ^{\ast }\left( \square _{3}\left( f\right) \right)
=A\left( j_{k}\left( A\right) ^{\ast }\left( f\right) \right) ,
\end{equation*}%
for all functions $f\in C^{\infty }\left( J^{k}\pi \right) $ and $k=0,1,..,$
and for all operators of the third order $A\in \mathbf{Diff}_{3}\left(
\mathbf{1}\right) .$

The standard arguments show that this operator exists and unique and in
local coordinates has the form%
\begin{equation}
\square _{3}=6\sum_{\alpha ,0\leq \left\vert \alpha \right\vert \leq 3}\frac{%
u^{\alpha }}{\alpha !}\left( \frac{d}{dx}\right) ^{\alpha }.
\label{universe3}
\end{equation}

\begin{theorem}
\begin{enumerate}
\item The universal total differential operator of the third order $\square
_{3}$ exists and unique and has representation (\ref{universe3}) in the
standard local coordinates.

\item The universal total differential operator of the third order $\square
_{3}$ commutes with action of the diffeomorphism group $\mathcal{G}\left(
M\right) $ on the jet bundles $\pi _{k}.$
\end{enumerate}
\end{theorem}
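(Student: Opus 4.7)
The plan is to establish both parts together via the standard universal-property machinery: construct a candidate operator $\square_3$ in local coordinates, verify it satisfies the defining property, use a pointwise surjectivity argument for uniqueness, and then deduce equivariance from the fact that the universal property is itself natural.

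For part (1), I would work in canonical coordinates $(x,y,u^\alpha_\beta)$ on $\mathbf{J}^k(\pi)$. For an operator $A \in \mathbf{Diff}_3(\mathbf{1})$ with $A = \sum_\alpha \frac{a_\alpha}{\alpha!}\partial^\alpha$, the section $j_k(A)$ satisfies $j_k(A)^*(u^\alpha_\beta) = \partial^\beta a_\alpha$. The defining property of the total derivatives $d/dx_i$ on functions tangent to the Cartan distribution is the intertwining
\begin{equation*}
j_{k+1}(A)^* \circ \frac{d}{dx_i} = \partial_{x_i} \circ j_k(A)^* \quad \text{on } C^\infty(\mathbf{J}^k(\pi)),
\end{equation*}
and iterating yields $j_{k+|\gamma|}(A)^* \circ (d/dx)^\gamma = \partial^\gamma \circ j_k(A)^*$. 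Substituting the proposed formula \eqref{universe3} into $j_{k+3}(A)^*(\square_3 f)$ and using multiplicativity of pullback together with $j_{k+3}(A)^*(u^\alpha) = a_\alpha$ reduces it directly to $A(j_k(A)^* f)$, provided the normalization constant is matched to the chosen coefficient convention. Uniqueness follows from the standard fact that the evaluation map $A \mapsto j_{k+3}(A)(p)$ is surjective onto $\mathbf{J}^{k+3}_p(\pi)$ at every point $p \in M$ — any prescribed jet can be realized by an operator with polynomial coefficients — so any two operators satisfying the universal property agree pointwise on $\mathbf{J}^{k+3}(\pi)$ and hence coincide.

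For part (2), equivariance is essentially automatic from uniqueness. The universal property is built entirely from $\mathcal{G}(M)$-natural operations: the pullback by the jet of a section, and composition of scalar differential operators under the action \eqref{diffAct}. Given $\phi \in \mathcal{G}(M)$ with prolongations $\phi^{(m)}:\mathbf{J}^m(\pi)\to \mathbf{J}^m(\pi)$, the key compatibility is
\begin{equation*}
j_m(\phi_* A \circ \phi_*^{-1}) = \phi^{(m)} \circ j_m(A) \circ \phi^{-1}.
\end{equation*}
Replacing $A$ by $\phi_* A \phi_*^{-1}$ in the universal property and pulling the resulting identity back by $\phi$ shows that the operator $\phi^{(k+3)*} \circ \square_3 \circ (\phi^{(k)*})^{-1}$ also satisfies the universal property. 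Uniqueness from part (1) then forces it to equal $\square_3$, yielding the commutation $\phi^{(k+3)*} \circ \square_3 = \square_3 \circ \phi^{(k)*}$.

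The main obstacle I anticipate lies in the combinatorial bookkeeping of part (1): one must match the factorial normalization in $A = \sum \frac{a_\alpha}{\alpha!}\partial^\alpha$, the coordinate identifications $u^\alpha = a_\alpha$, and the overall numerical constant in \eqref{universe3} so that the pullback computation returns exactly $A(j_k(A)^* f)$ rather than a rescaling. The surjectivity of $k$-jet evaluation used for uniqueness and the prolongation identity $j_m(\phi_* A \phi_*^{-1}) = \phi^{(m)} j_m(A) \phi^{-1}$ are standard facts about jet bundles, so once they are stated the naturality argument is essentially immediate.
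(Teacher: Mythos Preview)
Your proposal is correct and is precisely the kind of argument the paper has in mind: the paper does not actually write out a proof of this theorem, stating only that ``the standard arguments show that this operator exists and unique'' and referring to the analogous second-order construction in \cite{LY2}. Your outline---verifying the universal property for the candidate formula via the intertwining relation for total derivatives, deducing uniqueness from surjectivity of jet evaluation, and obtaining $\mathcal{G}(M)$-equivariance as a formal consequence of uniqueness---is exactly what ``standard arguments'' means here.
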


\begin{corollary}
If $f$ is a natural differential invariant of order $k$ for differential
operators of the third order, then $\square _{3}\left( f\right) $ is a
natural invariant of the $\left( k+3\right) $ order.
\end{corollary}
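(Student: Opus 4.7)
The plan is to reduce the corollary to the two defining properties of a natural differential invariant and verify each directly using what has already been established. Recall that a natural differential invariant of order $k$ for third order linear scalar differential operators is a function $f \in C^{\infty}(\mathbf{J}^{k}(\pi))$ that is (a) rational along the fibers of $\pi_{k}: \mathbf{J}^{k}(\pi) \to M$, and (b) invariant under the prolonged action of $\mathcal{G}(M)$ on $\mathbf{J}^{k}(\pi)$. It suffices to establish both properties for $\square_{3}(f) \in C^{\infty}(\mathbf{J}^{k+3}(\pi))$.

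The invariance step is immediate from part (2) of the preceding theorem, which asserts that $\square_{3}$ intertwines the prolonged $\mathcal{G}(M)$-actions on $\mathbf{J}^{k}(\pi)$ and $\mathbf{J}^{k+3}(\pi)$. Explicitly, for any $\phi \in \mathcal{G}(M)$ with $k$-th and $(k+3)$-th prolongations $\phi^{(k)}$ and $\phi^{(k+3)}$, one has $(\phi^{(k+3)})^{\ast} \circ \square_{3} = \square_{3} \circ (\phi^{(k)})^{\ast}$. Applying this identity to $f$ and using the invariance hypothesis $(\phi^{(k)})^{\ast} f = f$ gives $(\phi^{(k+3)})^{\ast}\square_{3}(f) = \square_{3}(f)$, so $\square_{3}(f)$ is $\mathcal{G}(M)$-invariant on $\mathbf{J}^{k+3}(\pi)$.

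For rationality along the fibers, I would inspect the explicit local formula (\ref{universe3}). The coefficients $u^{\alpha}/\alpha!$ are polynomials in the fiber coordinates of $\pi$, and each total derivation $d/dx$, $d/dy$ sends a jet coordinate $u^{\alpha}_{\beta}$ (with $|\alpha| \leq 3$, $|\beta| \leq k+2$) to $u^{\alpha}_{\beta + e_{i}}$ plus a term involving only the $\partial_{x_{i}}$ action on functions of $(x,y)$. Consequently $\square_{3}$ is a polynomial-coefficient horizontal differential operator, and since the class of functions rational along fibers is closed under multiplication by polynomials in jet coordinates and under total derivation (by the quotient rule), $\square_{3}$ carries any function rational along the fibers of $\pi_{k}$ to one rational along the fibers of $\pi_{k+3}$. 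Combined with the invariance above, this shows $\square_{3}(f)$ is a natural differential invariant of order $k+3$.

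The main obstacle is purely notational bookkeeping: the commutation of $\square_{3}$ with the diffeomorphism group action is given by the preceding theorem, and rationality is preserved by the polynomial form of the universal operator, so the corollary is essentially a one-line consequence once the right general framework is recognised.
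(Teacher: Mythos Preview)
Your proposal is correct and follows exactly the reasoning the paper intends: the corollary is stated without proof in the paper, as an immediate consequence of part (2) of the preceding theorem (invariance of $\square_3$ under $\mathcal{G}(M)$) together with the explicit polynomial form (\ref{universe3}) of the universal operator. You have simply made explicit the two-step verification (invariance and fiberwise rationality) that the paper leaves to the reader.
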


Here, as above, by \textit{natural differential invariant} of order $k$ we
mean a function on $J^{k}\pi $ which is $\mathcal{G}\left( M\right) $%
-invariant and which is rational along fibres of the projection $\pi _{k}.$

We'll also say that two natural differential invariants $I_{1},I_{2}$ are in 
\textit{general position if }%
\begin{equation}
\widehat{d}I_{1}\wedge \widehat{d}I_{2}\neq 0.  \label{general position}
\end{equation}

\begin{theorem}[The principle of n-invariants,(\protect\cite{ALV})]
Let natural differential invariants $I_{1},I_{2}$ be in general position and
let%
\begin{equation*}
J^{\alpha }=\square _{3}\left( I^{\alpha }\right) ,
\end{equation*}%
where $\alpha =\left( \alpha _{1},\alpha _{2}\right) $ and $0\leq \left\vert
\alpha \right\vert \leq 3.$ \newline
Then the field of natural differential invariants is generated by invariants 
$\left( I_{1},I_{2},J^{\alpha }\right) $ and all their Tresse derivatives%
\begin{equation*}
\frac{d^{l}J^{\alpha }}{dI_{1}^{l_{1}}dI_{2}^{l_{2}}},
\end{equation*}%
where $l=l_{1}+l_{2}.$
\end{theorem}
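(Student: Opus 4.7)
The plan is to exploit the fact that, by the general position hypothesis, the pair $(I_{1},I_{2})$ locally serves as horizontal coordinates on the base, and then to use the universal operator $\square_{3}$ to read off the ten coefficients of an arbitrary third order operator expressed in these ``coordinates''. First I would set up the local picture: since $\widehat{d}I_{1}\wedge \widehat{d}I_{2}\neq 0$ on an open dense subset of $\mathbf{J}^{\infty }(\pi )$, the Tresse derivations $d/dI_{1},d/dI_{2}$ are well defined horizontal fields dual to $\widehat{d}I_{1},\widehat{d}I_{2}$. They are $\mathcal{G}(M)$-invariant because $I_{1},I_{2}$ and the total differential are, so iterated application to any natural differential invariant again produces a natural differential invariant.

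Next I would invoke the explicit expression (\ref{universe3}) for $\square _{3}$ and apply it to the ten monomials $I^{\alpha }=I_{1}^{\alpha _{1}}I_{2}^{\alpha _{2}}$ with $0\leq |\alpha |\leq 3$. Since $\square _{3}$ is linear in the jet coordinates $u^{\beta }$, the ten resulting equations
\begin{equation*}
J^{\alpha }=\sum_{|\beta |\leq 3}\frac{6}{\beta !}\,\bigl(d/dx\bigr)^{\beta }(I^{\alpha })\,u^{\beta }
\end{equation*}
form a linear system in the ten coefficients $u^{\beta }$ of a generic third order operator on a two-dimensional base. Ordering multi-indices by total degree, the matrix of this system is block-triangular, with diagonal blocks equal to the symmetric powers $S^{k}$, $k=0,1,2,3$, of the $2\times 2$ Jacobian of $(I_{1},I_{2})$ with respect to the base coordinates $(x,y)$. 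By (\ref{general position}) this Jacobian is invertible, so the full system has nonzero determinant equal, up to a numerical factor, to a power of $\widehat{d}I_{1}\wedge \widehat{d}I_{2}$. Solving the system expresses each $u^{\beta }$ as a rational function of $I_{1},I_{2},J^{\alpha }$ and total derivatives of $I_{1},I_{2}$, and the latter derivatives are themselves Tresse-derivable combinations of $I_{1},I_{2}$.

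The higher-order jet coordinates $u^{\beta }_{\gamma }$ are then obtained by applying the commuting horizontal fields $d/dI_{1},d/dI_{2}$ iteratively to the formulas of the preceding step, which is legitimate because $\square _{3}$ commutes with the Cartan distribution and because $(I_{1},I_{2})$ are local horizontal coordinates. Since any natural differential invariant of order $k$ is by definition rational in the $u^{\beta }_{\gamma }$ with $|\gamma |\leq k$ and invariant under $\mathcal{G}(M)$, it lies in the field generated by $I_{1},I_{2}$, the invariants $J^{\alpha }$, and all their Tresse derivatives.

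The main obstacle is the non-degeneracy of the $10\times 10$ system above: one must identify its determinant with a non-vanishing multiple of a power of the Jacobian $\widehat{d}I_{1}\wedge \widehat{d}I_{2}$. This is a direct Wronskian-type computation using the leading behaviour of $(d/dx)^{\beta }(I^{\alpha })$ with respect to the partial derivatives of $I_{1},I_{2}$, organised by degree. Once this is established, the $\mathcal{G}(M)$-equivariance of $\square _{3}$ (the preceding theorem) and the $\mathcal{G}(M)$-invariance of $I_{1},I_{2},J^{\alpha }$ reduce the remaining claims to routine verification, essentially the same mechanism as in the general ``principle of $n$-invariants'' of \cite{ALV}.
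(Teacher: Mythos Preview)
Your proposal is correct and follows essentially the same approach as the paper. The paper's proof is briefer and argues by restriction: for a generic operator $A$, the values $I_{1}(A),I_{2}(A)$ are local coordinates on $M$, the values $J^{\alpha}(A)$ recover the coefficients of $A$ in those coordinates, and hence any natural invariant $f(A)$ is a function of the $J^{\alpha}(A)$ and their derivatives; you make this recovery step explicit by writing out the $10\times 10$ linear system and identifying its block-triangular structure with diagonal blocks $S^{k}$ of the Jacobian, which is exactly the computation the paper is tacitly invoking.
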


\begin{proof}
First of all remark that condition (\ref{general position}) defines open and
dense domains in the $k$-jet bundles. Take an operator $A\in \mathbf{Diff}%
_{3}\left(\mathbf{1}\right) $ such that locally the section $j_{k}(A)$ belongs to
the domain. Let $f$ \ be a natural invariant of order $k$ and let%
\begin{equation*}
f\left( A\right) =j_{k}(A)^{\ast }\left( f\right)
\end{equation*}%
be its value on the operator $A$.

Then functions $I_{1}\left( A\right) $ and $I_{2}\left( A\right) $ are local
coordinates on $M$ and functions $J^{\alpha }\left( A\right) $ give us a
coefficients of $A$ in these coordinates. By definition, the value $f\left(
A\right) $ depends on these coefficients and their derivatives only, and its
true for almost all operators $A.$ Therefore, function $f$ itself is a
rational function of $J^{\alpha }$ and its Tresse derivatives.
\end{proof}

\section{Differential operators in linear bundles}

Here we extend results of the previous chapter on differential operators
acting in  line bundles $\xi :E\left( \xi \right) \rightarrow M.$

\subsection{Quantization}

Let $\xi :E\left( \xi \right) \rightarrow M$ \ be a line bundle. \ Denote by 
$\Sigma ^{\cdot }\left( \xi \right) =C^{\infty }\left( \xi \right) \otimes
\Sigma ^{\cdot }\left( M\right) $ the graded module of symmetric $\xi $%
-valued differential forms on the surface.

Assume now that both surface $M$ and bundle $\xi $ are equipped with
connections $\nabla ^{M}$ and $\nabla ^{\xi }$ respectively and let $%
d_{\nabla ^{M}}:\Omega ^{1}\left( M\right) \rightarrow \Omega ^{1}\left(
M\right) \otimes \Omega ^{1}\left( M\right) $ and $d_{\nabla ^{\xi
}}:C^{\infty }\left( \xi \right) \rightarrow C^{\infty }\left( \xi \right)
\otimes \Omega ^{1}\left( M\right) $ be their covariant differentials.

Similar to (\cite{LY2}) we define a derivation $d_{\nabla ^{\xi }}^{s}:$ $%
\Sigma ^{\cdot }\left( \xi \right) \rightarrow \Sigma ^{\cdot +1}\left( \xi
\right) $ of degree one over the derivation $d_{\nabla ^{M}}^{s}:$ $\Sigma
^{\cdot }\left( M\right) \rightarrow \Sigma ^{\cdot +1}\left( M\right) $ by
requirement that $d_{\nabla ^{\xi }}^{s}\left( s\otimes \theta \right)
=s\otimes d_{\nabla ^{M}}^{s}\theta +d_{\nabla ^{\xi }}^{s}\left( s\right)
\cdot \theta ,$ where $s\in C^{\infty }\left( \xi \right) ,\theta \in \Sigma
^{\cdot }\left( M\right) .$

It is important to note that both these derivations are differential
operators of the first order and the values of their symbols on a covector $%
\upsilon $ coincide with the symmetric product on the covector.

Let's now $\sigma \in \Sigma _{k}\left( M\right) $ be a symbol of order $k.$
We define its \textit{quantization} $\widehat{\sigma }$ to be a linear
differential operator of order $k$ in the line bundle $\xi ,$ having symbol $%
\sigma $ and acting in the following way%
\begin{equation}
\widehat{\sigma }\left( s\right) =\frac{1}{k!}\left\langle \sigma ,\left(
d_{\nabla ^{\xi }}^{s}\right) ^{k}\left( s\right) \right\rangle .
\label{quant}
\end{equation}%
Let $\left( x_{1},x_{2}\right) $ be local coordinates on $M$ and let $e$ be
a basic section of $\xi $ over the coordinate neighborhood.

Then, as above, we will write down elements of $\Sigma ^{k}\left( \xi
\right) $ in the form 
\begin{equation*}
e\otimes \sum_{\left\vert \alpha \right\vert =k}\alpha _{\alpha }\left(
x\right) w^{\alpha },
\end{equation*}%
where coefficients $a_{\alpha }\left( x\right) $ are smooth functions on the
neighborhood.

Let $\Gamma _{ij}^{l}$ be the Christoffel coefficients of the connection $%
\nabla ^{M}$ and $\theta =\theta _{1}dx_{1}+\theta _{2}dx_{2}$ be the
connection form of the connection $\nabla ^{\xi },$ i.e.%
\begin{equation*}
d_{\nabla ^{\xi }}^{s}\left( e\right) =e\otimes \left( \theta
_{1}w_{1}+\theta _{2}w_{2}\right) .
\end{equation*}

Then the derivation $d_{\nabla ^{\xi }}^{s}$ takes the form%
\begin{equation}
d_{\nabla ^{\xi }}^{s}=w_{1}\left( \partial _{1}+\theta _{1}\right)
+w_{2}\left( \partial _{2}+\theta _{2}\right) -\sum \Gamma
_{ij}^{l}w_{i}w_{j}\partial _{w_{l}}.  \label{Qderivation}
\end{equation}

Remark that formula (\ref{quant}) allows us do define prolongations of
operator $A\in \mathbf{Diff}_{3}\left( \xi \right) $ to operators $A^{\left(
k\right) }\in \mathbf{Diff}_{3}\left( \xi ^{\otimes k}\right) $ acting in
the tensor powers $\xi ^{\otimes k}$ of the bundle, for all $k\in \mathbb{Z}%
\mathit{.}$

\subsection{Connection defined by the 3rd order differential operator}

Let $A\in \mathbf{Diff}_{3}\left( \xi \right) $ be the third order linear
differential operator, acting in the bundle $\xi .$ Denote by $\sigma _{3,A}=%
\rm{smbl}_{3}\left( A\right) \in \Sigma _{3}\left( M\right) $ the
principal symbol of this operator and assume that $\sigma _{3,A}$ is
regular. Let $\nabla =\nabla _{\sigma _{3,A}}$ be the Chern connection
associated with this symbol.

Assume that a connection $\nabla ^{\xi }$ is given, then we define \textit{%
subsymbol }$\sigma _{2,A}\left( \xi \right) \in \Sigma _{2}\left( M\right) $
as follows%
\begin{equation*}
\sigma _{2,A}\left( \xi \right) =\rm{smbl}_{2}\left( A-%
\widehat{\sigma _{3,A}}\right) .
\end{equation*}

\begin{theorem}
There is and unique a connection $\nabla ^{\xi }$ in the linear bundle such
that 
\begin{equation}
\sigma _{2,A}\left( \xi \right) =\lambda \ g_{-1/3}\left( \sigma
_{3,A}\right) ,  \label{coneq}
\end{equation}%
where $g_{-1/3}\left( \sigma _{3,A}\right) $ is the Wagner metric (\ref{gk}%
), associated with symbol $\sigma _{3,A},$ and $\lambda \in C^{\infty
}\left( M\right) $ is a multiplier.
\end{theorem}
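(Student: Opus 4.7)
The plan is to parametrize connections on $\xi$ by a reference plus a $1$-form, translate equation (\ref{coneq}) into a pointwise $3\times 3$ linear system, and verify that its determinant is a nonzero multiple of $\Delta(\sigma_{3,A})$.

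Fix any reference connection $\nabla^\xi_0$ on the line bundle (which exists globally on any line bundle over a paracompact manifold). Every other connection has the form $\nabla^\xi=\nabla^\xi_0+\alpha$ for a $1$-form $\alpha\in\Omega^1(M)$, so the unknowns in (\ref{coneq}) are $\alpha$ and $\lambda$. First I would compute how $\sigma_{2,A}(\xi)$ depends on $\alpha$. Under $\nabla^\xi_0\mapsto\nabla^\xi_0+\alpha$, the derivation $d^s_{\nabla^\xi}$ defined in (\ref{Qderivation}) changes by the symmetric-product multiplication operator $m_\alpha$. Expanding $(d^s_{\nabla^\xi_0}+m_\alpha)^3$ in powers of $m_\alpha$ and pushing the factors $m_\alpha$ outward via the commutator identity
\begin{equation*}
[d^s_{\nabla^\xi},m_\alpha]=m_{d^s_{\nabla^M}\alpha},
\end{equation*}
only the term $3\,m_\alpha(d^s_{\nabla^\xi_0})^2$ contributes at the level of the second-order symbol, the remaining commutator corrections and the quadratic/cubic terms in $m_\alpha$ contributing only to orders $\le 1$. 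After contracting with $\sigma_{3,A}$ and dividing by $3!$, the change in the order-two symbol of $\widehat{\sigma_{3,A}}$ equals a nonzero constant $c$ times the one-index contraction $\iota_\alpha\sigma_{3,A}$, and hence
\begin{equation*}
\sigma_{2,A}(\xi_0+\alpha)=\sigma_{2,A}(\xi_0)-c\,\iota_\alpha\sigma_{3,A}.
\end{equation*}

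Substituting into (\ref{coneq}) yields the pointwise linear system
\begin{equation*}
c\,\iota_\alpha\sigma_{3,A}+\lambda\,g_{-1/3}(\sigma_{3,A})=\sigma_{2,A}(\xi_0)
\end{equation*}
for the three unknowns $(\alpha_1,\alpha_2,\lambda)$ valued in the three-dimensional fibre $S^2T_pM$. Its determinant is, up to the nonvanishing scalar $\Delta(\sigma_{3,A})^{-1/3}$ built into $g_{-1/3}$, the determinant of the matrix whose columns are $\iota_{dx}\sigma_{3,A}$, $\iota_{dy}\sigma_{3,A}$ and $\mathrm{Hess}(\sigma_{3,A})$ expressed in the basis $(\partial_x^2,\partial_x\partial_y,\partial_y^2)$. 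Using the explicit formulas from the remark following the Wagner metric theorem, direct expansion gives
\begin{equation*}
4(a_1a_3-a_2^2)(a_2a_4-a_3^2)-(a_1a_4-a_2a_3)^2=\Delta(\sigma_{3,A}),
\end{equation*}
which is nonzero by regularity of $\sigma_{3,A}$. Hence the system has a unique smooth solution $(\alpha,\lambda)$, establishing both existence and uniqueness of $\nabla^\xi$.

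The main obstacle is the commutator bookkeeping in the first step: one must verify that every lower-order commutator term $m_{d^s_{\nabla^M}\alpha}$, $m_{(d^s_{\nabla^M})^2\alpha}$, together with the quadratic and cubic contributions in $m_\alpha$, drops to orders $\le 1$ of $\widehat{\sigma_{3,A}}$, so that the second-order symbol is altered purely by $c\,\iota_\alpha\sigma_{3,A}$ with a specific nonzero constant. After that, the discriminant identity is a direct algebraic check and the conclusion reduces to invertibility of a $3\times 3$ matrix.
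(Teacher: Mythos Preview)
Your proposal is correct and follows essentially the same route as the paper: both reduce (\ref{coneq}) to a pointwise $3\times 3$ linear system in the connection coefficients and the multiplier $\lambda$, and both identify its determinant with a nonzero multiple of $\Delta(\sigma_{3,A})$. The paper simply writes the system out in local coordinates using (\ref{Qderivation}), while you supply the extra conceptual step (reference connection plus the commutator identity $[d^s_{\nabla^\xi},m_\alpha]=m_{d^s_{\nabla^M}\alpha}$) to justify the affine dependence of $\sigma_{2,A}(\xi)$ on $\alpha$---a welcome clarification, but not a different argument.
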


\begin{proof}
Assume that operator $A$ has the form 
\begin{eqnarray*}
A &=&a_{1}\partial _{1}^{3}+3a_{2}\partial _{1}^{2}\partial
_{2}+3a_{3}\partial _{1}\partial _{2}^{2}+a_{4}\partial _{2}^{2}+ \\
&&a_{11}\partial _{1}^{2}+2a_{12}\partial _{1}\partial _{2}+a_{22}\partial
_{2}^{2}+b_{1}\partial _{1}+b_{2}\partial _{2}+b_{0}
\end{eqnarray*}%
in a local coordinates $\left( x_{1},x_{2}\right) .$

Then equation (\ref{coneq}) gives us a linear system 
\begin{eqnarray*}
3a_{1}\theta _{1}+3a_{2}\theta _{2}+a_{11} &=&\left(
a_{1}a_{3}-a_{2}^{2}\right) \Delta \left( \sigma _{3,A}\right)
^{-1/3}\lambda , \\
3a_{3}\theta _{1}+3a_{4}\theta _{2}+a_{22} &=&\left(
a_{2}a_{4}-a_{3}^{2}\right) \Delta \left( \sigma _{3,A}\right)
^{-1/3}\lambda , \\
6a_{2}\theta _{1}+6a_{3}\theta _{2}+2a_{12} &=&\left(
a_{1}a_{4}-a_{2}a_{3}\right) \Delta \left( \sigma _{3,A}\right)
^{-1/3}\lambda ,
\end{eqnarray*}%
on coefficients $\left( \theta _{1},\theta _{2}\right) $ of the connection
form and multiplier $\lambda .$

The determinant of this system equals $\Delta \left( \sigma _{3,A}\right) $
and therefore the system has unique solution $\left( \theta _{1},\theta
_{2},\lambda \right) .$
\end{proof}

\begin{remark}
The similar result also valids for the Wagner connection and therefore there
are two natural prolongations of the operators $A\in \mathbf{Diff}_{3}\left(
\xi \right) \ $to the operators $A^{\left( k\right) }\in \mathbf{Diff}%
_{3}\left( \xi ^{\otimes k}\right) .$
\end{remark}

\subsection{Invariants of automorphism group}

By $\pi ^{\xi }:\mathbf{Diff}_{3}\left( \xi \right) \rightarrow M$ denote 
the bundle of linear differential operators, acting in the bundle $\xi $ and
having order 3.

Let%
\begin{equation*}
\pi _{k}^{\xi }:J^{k}\left( \pi ^{\xi }\right) \rightarrow M
\end{equation*}%
be the bundles of k-jets of sections of the bundle $\pi ^{\xi }.$

As above, we denote by $\mathcal{O\subset }J^{2}\left( \pi ^{\xi }\right) $
and $\mathcal{O}^{\left( k\right) }\mathcal{\subset }J^{k}\left( \pi ^{\xi
}\right) ,$ $k\geq 4,$ the domains where the jets of the leading symbol $%
\sigma _{3,A}\in \Sigma _{3},$ $A\in \mathbf{Diff}_{3}\left( \xi \right) ,$
satisfy the same regularity conditions  that in the scalar case, \ $\xi =%
\mathbf{1}$.

As we have seen, differential operators with such regular symbols define:
\begin{enumerate}
\item the Chern or the Wagner connections on $M$ and
\item the affine connection in the line bundle $\xi$.
\end{enumerate}

Therefore, the quantization, defining by these connections, allows us to
split operator $A$ into the sum%
\begin{equation*}
A=\widehat{\sigma _{3,A}}+\widehat{\sigma _{2,A}}+\widehat{\sigma _{1,A}}+%
\widehat{\sigma _{0,A}},
\end{equation*}%
where $\sigma _{i,A}\in \Sigma _{i}.$

Therefore, the operator $A$ is defined by the total symbol $\sigma
_{A}=\oplus _{0\leq i\leq 3}\sigma _{i,A}$ and the connection $\nabla ^{A}$
in the line bundle.

Remark, that all this construction and the splitting is defined by the
second jet of the operator.

In other words, similar to the scalar case, we get a mapping 
\begin{equation*}
\nu ^{\xi }:J^{2}\left( \pi ^{\xi }\right) \supset \mathcal{O\rightarrow }%
\oplus _{0\leq i\leq 3}\tau _{i}\oplus \Lambda ^{2}\tau ^{\ast },
\end{equation*}%
where the component in $\Lambda ^{2}\tau ^{\ast }$ is the curvature form $%
\omega _{A}$ of the connection $\nabla ^{A}.$

Remark that mapping $\nu ^{\xi }$ is invariant of the action of the
automorphism group $\mathbf{Aut}(\xi )$ on the left hand side and the
induced action of the diffeomorphism group on the right hand side. 

Therefore,we have $\mathbf{Aut}(\xi )$-invariants of operators in terms of
invariants of the total symbols and the curvature form.

If we consider now operators with $j_{4}\left( \sigma _{3,A}\right) \in 
\mathcal{O}^{\left( 4\right) }$ and take coordinates of the total symbol and
the curvature form in the invariant coframe $\left\langle \theta ,\theta
^{\prime }\right\rangle $ we get the basic $\mathbf{Aut}(\xi )$-invariants $%
J_{\alpha ,i}^{\xi }$ for the total symbol and $K$ for the curvature form.

Let 
\begin{equation*}
\nu _{4}^{\xi }:J^{4}\left( \pi ^{\xi }\right) \mathcal{\rightarrow }%
J^{2}(\oplus _{0\leq i\leq 3}\tau _{i}\oplus \Lambda ^{2}\tau ^{\ast }),
\end{equation*}%
be the second prolongation of $\nu ^{\xi }.$

Then the following theorem valids.

\begin{theorem}
The field of rational $\mathbf{Aut}(\xi )$-invariants is generated by the
basic invariants%
\begin{equation*}
\left( \nu _{4}^{\xi }\right) ^{\ast }\left( J_{\alpha ,i}^{\xi }\right)
,\left( \nu _{4}^{\xi }\right) ^{\ast }\left( K\right)
\end{equation*}%
and the Tresse derivations $\delta _{1},\delta _{2}.$
\end{theorem}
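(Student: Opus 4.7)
The strategy is to transport the question from $J^{k}(\pi^{\xi})$ to jet spaces of the target of $\nu^{\xi}$, exactly as in the scalar case, and then combine the already proven generating theorem for total symbols with the analysis of the curvature 2-form. Because $\nu^{\xi}$ is equivariant with respect to the action of $\mathbf{Aut}(\xi)$ on the source and the induced $\mathcal{G}(M)$-action on the target, pull-backs of $\mathcal{G}(M)$-invariants are $\mathbf{Aut}(\xi)$-invariants of operators, and every $\mathbf{Aut}(\xi)$-invariant of $A$ factors through $\nu^{\xi}$ up to the gauge ambiguity of $\nabla^{A}$, which I will control separately.

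First, I would note that the theorem proven earlier for the total symbol (the generation of its natural differential invariant field by the components $I_{i,\alpha}$ and the Tresse derivations $\delta_{1},\delta_{2}$) applies verbatim to the bundle-valued situation, because the invariant coframe $\langle\theta,\theta'\rangle$ is constructed from $j_{4}(\sigma_{3,A})$, which is intrinsic and $\mathbf{Aut}(\xi)$-equivariant. Pulling back along $\nu_{4}^{\xi}$ therefore produces the invariants $(\nu_{4}^{\xi})^{\ast}(J_{\alpha,i}^{\xi})$, and combined with prolongations via $\delta_{1},\delta_{2}$ they generate all natural invariants of the total symbol part of $\nu^{\xi}(A)$.

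Second, I would handle the $\Lambda^{2}\tau^{\ast}$-component. The curvature $\omega_{A}$ is a horizontal 2-form on a surface, so in the invariant coframe $\langle\theta,\theta'\rangle$ it is a single scalar multiple $K\cdot\theta\wedge\theta'$; this one scalar $K$ together with the Tresse derivatives captures all invariants built from $\omega_{A}$ and its prolongations. The crucial local point is that, modulo $\mathbf{Aut}(\xi)$, the connection $\nabla^{A}$ in the line bundle is determined by its curvature: any two connections in a line bundle differ by a 1-form, the gauge transformations $e\mapsto he$ add the exact form $d\ln|h|$, and locally every closed 2-form is the differential of such a form. Thus, once the total symbol is fixed, only the curvature 2-form carries the remaining $\mathbf{Aut}(\xi)$-invariant content of $\nabla^{A}$, and this content is encoded by $(\nu_{4}^{\xi})^{\ast}(K)$ together with its Tresse derivatives.

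Third, I would glue these two pieces using the principle of $n$-invariants already invoked in the scalar section: on the open domain where the two basic invariants singled out by the coframe are in general position, every $\mathbf{Aut}(\xi)$-invariant rational function on $J^{k}(\pi^{\xi})$ can be expressed rationally in the pulled-back basic invariants and their Tresse derivatives along $\delta_{1},\delta_{2}$. The main obstacle I would expect is precisely the connection's gauge ambiguity alluded to above: one must verify that the two natural constructions (Chern connection on $M$, induced connection $\nabla^{A}$ on $\xi$) are functorial under $\mathbf{Aut}(\xi)$ in such a way that the freedom $\theta\mapsto\theta+d\ln|h|$ in $\nabla^{A}$ is exactly absorbed by the scalar part of $\mathbf{Aut}(\xi)$, so that no further invariants beyond those of $\omega_{A}=d\theta$ survive. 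Once this compatibility is verified, the combination of the scalar theorem applied component-wise to $\oplus_{i}\tau_{i}$ and the single curvature invariant $K$ gives the stated generating set.
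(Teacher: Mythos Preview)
Your proposal is correct and follows the same line as the paper. The paper gives no explicit proof for this theorem: it merely sets up the equivariant map $\nu^{\xi}$, observes that $\mathbf{Aut}(\xi)$-invariants of operators arise from $\mathcal{G}(M)$-invariants of total symbols together with the curvature component, and states the theorem as an immediate consequence of the earlier generating theorem for total symbols. Your write-up makes explicit the one point the paper leaves tacit---that modulo the gauge action $e\mapsto he$ the connection $\nabla^{A}$ is locally determined by its curvature $\omega_{A}$, so no $\mathbf{Aut}(\xi)$-invariant content is lost when $\nu^{\xi}$ records only $\omega_{A}$ rather than $\nabla^{A}$ itself---which is a genuine clarification rather than a different route.
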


\section{ Equivalence}

\subsection{Scalar operators}

In this section we consider the equivalence problem for scalar linear
differential operators of the third order, $A\in\mathbf{Diff}_3\left(\mathbf{1}
\right)$.

We say that this operators are \textit{in general position }if for any point 
$a\in M$ \ there are two natural invariants, say $I_{1},I_{2}$, such that their
values $I_{1}\left( A\right) ,I_{2}\left( A\right) $ are independent in a
neighborhood $U$ of the point, i.e. $dI_{1}\left( A\right) \wedge
dI_{2}\left( A\right) \neq 0.$ 

We call functions $I_{1}\left( A\right) ,I_{2}\left( A\right) $ \textit{%
natural coordinates }in $U.$

We also call an atlas $\left\{ U_{\alpha },\phi _{a}:U_{\alpha }\rightarrow 
\mathbf{D}_{\alpha }\subset \mathbb{R}^{2}\right\} $ \textit{natural} if
coordinates $\phi _{a}=\left( I_{1}^{\alpha }\left( A\right) ,I_{2}^{\alpha
}\left( A\right) \right) $ are given by distinct natural invariants: $\left(
I_{1}^{\alpha }\left( A\right) ,I_{2}^{\alpha }\left( A\right) \right) \neq
\left( I_{1}^{\beta }\left( A\right) ,I_{2}^{\beta }\left( A\right) \right)
, $ when $\alpha \neq \beta .$

We denote by $\mathbf{D}_{\alpha \beta }=\phi _{\alpha }\left( U_{\alpha
}\cap U_{\beta }\right) $ and will assume that domains $\mathbf{D}_{\alpha }$
and $\mathbf{D}_{\alpha \beta }$ are connected and simply connected.

Let $A_{\alpha }=\phi _{\alpha \ast }\left( \left. A\right\vert _{U_{\alpha
}}\right) ,$ $A_{\alpha \beta }=\phi _{\alpha \ast }\left( \left.
A\right\vert _{U_{\alpha }\cap U_{\beta }}\right) $ be the images of the
operator $A$ in these coordinates. Then $\phi _{\alpha \beta \ast }\left(
A_{\alpha \beta }\right) =A_{\beta a},$ where $\phi _{\alpha \beta }:\mathbf{%
D}_{\alpha \beta }\rightarrow \mathbf{D}_{\beta \alpha }$ are the transition
mappings.

We call collection $\left( \mathbf{D}_{\alpha },\mathbf{D}_{\alpha \beta
},\phi _{\alpha \beta },A_{\alpha },A_{\alpha \beta }\right) =\mathcal{D}%
\left( A\right) $ \textit{natural model or natural atlas for differential
operator} $A.$

\begin{theorem}
Let $A,A^{\prime }\in \mathbf{Diff}_{3}\left(\mathbf{1}\right) $ be operators in
general position. Then these operators are equivalent with respect to group
of diffeomorphisms $\mathcal{G}\left( M\right) $ if and only if the
following conditions hold:\newline
Open sets 
\begin{equation*}
U_{\alpha }^{\prime }=\left( \phi _{\alpha }^{\prime }\right) ^{-1}\left( 
\mathbf{D}_{\alpha }\right) ,
\end{equation*}%
\newline
where $\phi _{\alpha }^{\prime }=\left( I_{1}^{\alpha }\left( A^{\prime
}\right) ,I_{2}^{\alpha }\left( A^{\prime }\right) \right) :M\rightarrow 
\mathbb{R}^{2}$ constitute a natural atlas for operator $A^{\prime },$ $\phi
_{\alpha \beta }^{\prime }=\phi _{\alpha \beta }:\mathbf{D}_{\alpha \beta
}\rightarrow \mathbf{D}_{\beta \alpha },$ and\newline
\begin{equation*}
A_{\alpha }=\phi _{\alpha \ast }^{\prime }\left( \left. A^{\prime
}\right\vert _{U_{\alpha }^{\prime }}\right) ,A_{\alpha \beta }=\phi
_{\alpha \ast }^{\prime }\left( \left. A^{\prime }\right\vert _{U_{\alpha
}^{\prime }\cap U_{\beta }^{\prime }}\right) .
\end{equation*}
\end{theorem}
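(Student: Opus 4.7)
The plan is to prove both directions by constructing the equivalence diffeomorphism chart-by-chart, using that the natural invariants $I_i^\alpha$ form a $\mathcal{G}(M)$-invariant ``dictionary'' between an operator and its coordinate representation.

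For the necessity direction, I would suppose there exists $\phi \in \mathcal{G}(M)$ with $\phi_*(A) = A'$. Since the $I_i^\alpha$ are $\mathcal{G}(M)$-invariant (as established in the invariant theory of the preceding sections), we get $I_i^\alpha(A') = I_i^\alpha(A) \circ \phi^{-1}$, hence $\phi_\alpha' = \phi_\alpha \circ \phi^{-1}$. This immediately yields $U_\alpha' = \phi(U_\alpha)$, which inherits the natural atlas structure and has identical transition maps $\phi_{\alpha\beta}' = \phi_\alpha' \circ (\phi_\beta')^{-1} = \phi_\alpha \circ \phi_\beta^{-1} = \phi_{\alpha\beta}$ on overlaps. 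Finally, pushing $A'$ down through $\phi_\alpha'$ gives $\phi_{\alpha*}'(A'|_{U_\alpha'}) = (\phi_\alpha \circ \phi^{-1})_*(\phi_*(A)|_{U_\alpha'}) = \phi_{\alpha*}(A|_{U_\alpha}) = A_\alpha$.

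For the sufficiency direction, which is the essential content, I would assume the listed conditions and construct $\phi$ locally and then glue. On each chart, set
\begin{equation*}
\psi_\alpha \overset{\text{def}}{=} \phi_\alpha^{-1} \circ \phi_\alpha' \colon U_\alpha' \to U_\alpha,
\end{equation*}
a diffeomorphism since both $\phi_\alpha$ and $\phi_\alpha'$ are. The key gluing check uses that the transition maps coincide: on $U_\alpha' \cap U_\beta'$,
\begin{equation*}
\psi_\alpha \circ \psi_\beta^{-1} = \phi_\alpha^{-1} \circ \phi_\alpha' \circ (\phi_\beta')^{-1} \circ \phi_\beta = \phi_\alpha^{-1} \circ \phi_{\alpha\beta}' \circ \phi_\beta = \phi_\alpha^{-1} \circ \phi_{\alpha\beta} \circ \phi_\beta = \mathrm{id},
\end{equation*}
so $\psi_\alpha = \psi_\beta$ on $U_\alpha' \cap U_\beta'$. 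Since $\{U_\alpha'\}$ covers $M$ (being a natural atlas), these patch to a global diffeomorphism $\psi \colon M \to M$. The equality $A = \psi_*(A')$ is then immediate chart by chart: on $U_\alpha$,
\begin{equation*}
\psi_{\alpha *}(A'|_{U_\alpha'}) = (\phi_\alpha^{-1} \circ \phi_\alpha')_*(A'|_{U_\alpha'}) = \phi_\alpha^{-1}{}_*\bigl(\phi_{\alpha *}'(A'|_{U_\alpha'})\bigr) = \phi_\alpha^{-1}{}_*(A_\alpha) = A|_{U_\alpha}.
\end{equation*}

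The main subtlety is the gluing step, which hinges on two things that the natural-atlas hypothesis is designed to encode: the intersection domains $\mathbf{D}_{\alpha\beta}$ must be literally the same for the two atlases (forced by $\phi_{\alpha\beta}' = \phi_{\alpha\beta}$), and the natural coordinates must actually separate points on each $U_\alpha$ (forced by the general-position hypothesis together with the theorem that natural invariants separate regular orbits). Without the invariant-separation property one could not conclude that $\phi_\alpha$ and $\phi_\alpha'$ are genuine diffeomorphisms onto the \emph{same} chart $\mathbf{D}_\alpha$, and the whole construction would collapse to a merely local statement. The rest of the argument is a formal diagram chase in the category of atlases.
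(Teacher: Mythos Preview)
Your proposal is correct and follows the same idea as the paper's proof: the key observation is that in natural coordinates an equivalence diffeomorphism must read as the identity, since $I(\psi_*(A)) = \psi^{*-1}(I(A))$ for any natural invariant $I$. The paper's own proof is a one-sentence sketch of the necessity direction only, while you have written out both directions explicitly, including the chartwise construction $\psi_\alpha = \phi_\alpha^{-1}\circ\phi_\alpha'$ and the gluing via coincidence of transition maps; this is exactly the argument the paper's sketch leaves to the reader.
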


\begin{proof}
Indeed, any diffeomorphism $\psi :M\rightarrow M$ such that $\psi _{\ast
}\left( A\right) =A^{\prime }$ transform natural atlas to the natural one
and because of $\psi ^{\ast -1}\left( I\left( A\right) \right) =I\left( \psi
_{\ast }\left( A\right) \right) ,$ for any natural invariant $I,$ this
diffeomorphism has the form of the identity map in the natural coordinates.
\end{proof}

\begin{remark}
In natural coordinates operators $A_{\alpha }$ are defined by 10 functions.
\end{remark}

\subsection{Operators acting in line bundles}

First of all we discuss the problem of lifting diffeomorphisms $\psi
:M\rightarrow M$ \ to automorphisms $\widetilde{\psi }:E\left( \xi \right)
\rightarrow E\left( \xi \right) $ of line bundles.

\begin{lemma}
A diffeomorphism $\psi :M\rightarrow M$ admits a lifting to an automorphism $%
\widetilde{\psi }:E\left( \xi \right) \rightarrow E\left( \xi \right) $ $\ $%
if and only if $\psi ^{\ast }\left( w_{1}\left( \xi \right) \right)
=w_{1}\left( \xi \right) ,$ where $w_{1}\left( \xi \right) \in H^{1}\left( M,%
\mathbb{Z}_{2}\right) $ is the first Stiefel-Whitney class of the bundle.
\end{lemma}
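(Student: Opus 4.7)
The plan is to translate the existence of a lift into an isomorphism between $\xi$ and the pullback line bundle $\psi^{\ast}\xi$, and then invoke the classification of real line bundles by the first Stiefel--Whitney class.

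First, I would show that a lift $\widetilde{\psi}:E(\xi)\to E(\xi)$ covering $\psi$ is the same data as a bundle isomorphism $\phi:\xi\to \psi^{\ast}\xi$ over $M$. Since $\widetilde{\psi}$ covers $\psi$, its restriction to each fibre $E(\xi)_{p}$ is a linear isomorphism onto $E(\xi)_{\psi(p)}=(\psi^{\ast}\xi)_{p}$; the collection $\{\widetilde{\psi}|_{E(\xi)_{p}}\}_{p\in M}$ is smooth in $p$ because $\widetilde{\psi}$ is smooth, so it defines a bundle isomorphism $\phi:\xi\to \psi^{\ast}\xi$ over the identity of $M$. Conversely, given such a $\phi$, the formula $\widetilde{\psi}(e):=\phi_{p}(e)\in E(\xi)_{\psi(p)}$ for $e\in E(\xi)_{p}$ produces the required lift. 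Hence a lift exists if and only if $\xi\cong\psi^{\ast}\xi$ as line bundles over $M$.

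Second, I would invoke the standard classification of real line bundles on a paracompact manifold: such bundles are classified up to isomorphism by their first Stiefel--Whitney class in $H^{1}(M,\mathbb{Z}_{2})$ (equivalently, $B\mathrm{O}(1)=\mathbb{RP}^{\infty}=K(\mathbb{Z}_{2},1)$). Together with the naturality of characteristic classes, $w_{1}(\psi^{\ast}\xi)=\psi^{\ast}(w_{1}(\xi))$, this gives
\[
\xi\cong\psi^{\ast}\xi \iff w_{1}(\xi)=\psi^{\ast}(w_{1}(\xi)),
\]
which is exactly the criterion asserted in the lemma.

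I do not anticipate a serious obstacle. The only point requiring any care is the fibrewise identification in the first paragraph; one must check that smoothness and linearity of $\widetilde{\psi}$ translate to a smooth bundle isomorphism into $\psi^{\ast}\xi$, but this is immediate from the universal property of the pullback bundle. Everything else is a direct appeal to the classification of real line bundles and naturality of $w_{1}$.
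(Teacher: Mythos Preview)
Your proposal is correct and follows essentially the same strategy as the paper: reduce the existence of a lift to an isomorphism $\xi\cong\psi^{\ast}\xi$ and then use the classification of real line bundles by $w_{1}$. The only cosmetic difference is that the paper phrases the second step via the Hom bundle, computing $w_{1}(\xi^{\ast}\otimes\psi^{\ast}\xi)=w_{1}(\xi)+\psi^{\ast}(w_{1}(\xi))$ and invoking the fact that a line bundle is trivial iff its $w_{1}$ vanishes, whereas you invoke the classification theorem directly; these are the same fact in slightly different packaging.
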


\begin{proof}
Remark that a real linear bundle $\xi $ is trivial if and only if $%
w_{1}\left( \xi \right) \in H^{1}\left( M,\mathbb{Z}_{2}\right) $ vanishes (%
\cite{MS}). 

Therefore, in the case when $w_{1}\left( \xi \right) =0$ the statement of
the lemma trivial.

Let now $w_{1}\left( \xi \right) \neq 0$ and let $\psi ^{\ast }\left( \xi
\right) $ be the line bundle induced by a diffeomorphism $\psi .$ Then, we
have 
\begin{equation*}
w_{1}\left( \xi ^{\ast }\otimes \psi ^{\ast }\left( \xi \right) \right)
=w_{1}\left( \xi ^{\ast }\right) +w_{1}\left( \psi ^{\ast }\left( \xi
\right) \right) =w_{1}\left( \xi \right) +\psi ^{\ast }\left( w_{1}\left(
\xi \right) \right) =0,
\end{equation*}%
if \ $\psi ^{\ast }\left( w_{1}\left( \xi \right) \right) =w_{1}\left( \xi
\right) .$

Therefore, any nowhere vanishing section of the bundle $\xi ^{\ast }\otimes
\psi ^{\ast }\left( \xi \right) $ give us an isomorphism between $\ $bundle $%
\xi $ and $\psi ^{\ast }\left( \xi \right) $ covering the identity map and
then the lift of diffeomorphism $\psi .$
\end{proof}

Similar to the scalar case we say that operator $A\in \mathbf{Diff}%
_{3}\left( \xi \right) $ is \textit{in general position }if for any point $%
a\in M$ \ there are two $\mathbf{Aut}(\xi )$-invariants, say $I_{1}^{\xi
},I_{2}^{\xi },$ such that $dI_{1}^{\xi }\left( A\right) \wedge dI_{2}^{\xi
}\left( A\right) \neq 0$ in some neighborhood $U^{\xi }$ of the point. 

As above we call functions $I_{1}^{\xi }\left( A\right) ,I_{2}^{\xi }\left(
A\right) $ \textit{natural coordinates }in $U,$ and we also call an atlas $%
\left\{ U_{\alpha }^{\xi },\,\phi _{\alpha }^{\xi }:U_{\alpha }^{\xi
}\rightarrow \mathbf{D}_{\alpha }^{\xi }\subset \mathbb{R}^{2}\right\} $ 
\textit{natural} if coordinates $\phi _{\alpha }^{\xi }=\left( I_{\alpha
1}^{\xi }\left( A\right) ,I_{\alpha 2}^{\xi }\left( A\right) \right) $ are
given by distinct natural invariants: $\left( I_{\alpha 1}^{\xi }\left(
A\right) ,I_{\alpha 2}^{\xi }\left( A\right) \right) \neq \left( I_{\beta
1}^{\xi }\left( A\right) ,I_{\beta 2}^{\xi }\left( A\right) \right) ,$ when $%
\alpha \neq \beta .$

We denote by $\mathbf{D}_{\alpha \beta }^{\xi }=\phi _{\alpha }^{\xi }\left(
U_{\alpha }^{\xi }\cap U_{\beta }^{\xi }\right) $ and assume that domains $%
\mathbf{D}_{\alpha }^{\xi }$ and $\mathbf{D}_{\alpha \beta }^{\xi }$ are
connected and simply connected.

Let $\sigma _{A\alpha }=\phi _{\alpha \ast }^{\xi }\left( \left. \sigma
_{A}\right\vert _{U_{\alpha }^{\xi }}\right) ,$ $\sigma _{A\alpha \beta
}=\phi _{\alpha \ast }^{\xi }\left( \left. \sigma _{A}\right\vert
_{U_{\alpha }^{\xi }\cap U_{\beta }^{\xi }}\right) ,\omega _{A\alpha }=\phi
_{\alpha \ast }^{\xi }\left( \omega _{A}\right) $ be the images of the total
symbol $\sigma _{A}$ and the curvature form $\omega _{A}$ in these
coordinates.

Then $\phi _{\alpha \beta \ast }\left( \sigma _{A\alpha \beta }\right)
=\sigma _{A\beta a}\ \ $and $\phi _{\alpha \beta \ast }\left( \omega
_{A\alpha \beta }\right) =\omega _{A\beta a},$ where $\phi _{\alpha \beta }:%
\mathbf{D}_{\alpha \beta }\rightarrow \mathbf{D}_{\beta \alpha }$ are the
transition maps.

We call collection $\left( \mathbf{D}_{\alpha },\mathbf{D}_{\alpha \beta
},\phi _{\alpha \beta },\sigma _{A\alpha },\sigma _{A\alpha \beta },\omega
_{A\alpha },\omega _{A\alpha \beta }\right) =\mathcal{D}\left( A\right) $ 
\textit{natural model or natural atlas for differential operator} $A\in 
\mathbf{Diff}_{3}\left( \xi \right) .$

Let $A^{\prime }\in \mathbf{Diff}_{3}\left( \xi \right) $ be another
operator such that natural model $\mathcal{D}\left( A\right) $ satisfies the
conditions of the above theorem and therefore defines the natural model for
operator $A^{\prime }.$

Denote by $\psi _{\mathbf{D}}:M\rightarrow M$ the diffeomorphism which
equalizes total symbols and curvature forms: 
$(\psi_{\mathbf{D}}) _{\ast }\left( \sigma_{A^{\prime }}\right) =\sigma _{A}$,  $(\psi_{\mathbf{D}}) _{\ast }\left( \omega _{A^{\prime
}}\right) =\omega _{A}$.

Due to the above lemma there is a lifting $\widetilde{\psi_{\mathbf{D}}}\in \mathbf{Aut}%
(\xi )$ of this diffeomorphism, if $\psi _{\mathbf{D}}^{\ast }\left(
w_{1}\left( \xi \right) \right) =w_{1}\left( \xi \right) .$

Let $A^{\prime \prime }=\widetilde{\psi _{\mathbf{D}}}_{\ast }\left(
A^{\prime }\right) $ \ be the image of the operator $A^{\prime }.$ 

Then
\begin{equation*}
\sigma _{A^{\prime \prime }}=\sigma _{A},\quad\omega _{A^{\prime \prime }}=\omega
_{A},
\end{equation*}%
and therefore 
\begin{equation*}
d_{\nabla ^{A^{\prime \prime }}}-d_{\nabla ^{A}}=\theta \otimes \rm{Id},
\end{equation*}%
where $\theta \in \Omega ^{1}\left( M\right) .$

This form is closed $d\theta =0,$ because $\omega _{A^{\prime \prime
}}=\omega _{A},$ and its cohomology class we denote by 
\begin{equation*}
\chi (A,A^{\prime })\in H^{1}\left( M,\mathbb{R}\right) .
\end{equation*}

If this class is trivial, then $\theta =df$ for some function $f\in
C^{\infty }\left( M\right)$, and therefore multiplication by function $\exp
\left( f\right) \in \mathcal{F}\left( M\right)$ establishes the equivalence
between operators $A$ and $A^{\prime \prime}$.

Summarizing, we get the following result.

\begin{theorem}
Let $A,A^{\prime }\in \mathbf{Diff}_{3}\left( \xi \right) $ be operators in
general position. Then these operators are equivalent with respect to group
of automorphisms $\mathbf{Aut}(\xi )$  if and only if the following
conditions hold:

\begin{enumerate}
\item $\phantom{\alpha}$\\
Open sets 
\begin{equation*}
U_{\alpha }^{\xi \prime }=\left( \phi _{\alpha }^{\xi \prime }\right)
^{-1}\left( \mathbf{D}_{\alpha }^{\xi }\right) ,
\end{equation*}%
where $\phi _{\alpha }^{\xi \prime }=\left( I_{\alpha 1}^{\xi }\left(
A^{\prime }\right) ,I_{\alpha 2}^{\xi }\left( A^{\prime }\right) \right)
:M\rightarrow \mathbb{R}^{2},$ constitute a natural atlas for operator $%
A^{\prime },$ $\phi _{\alpha \beta }^{\xi \prime }=\phi _{\alpha \beta
}^{\xi }:\mathbf{D}_{\alpha \beta }^{\xi }\rightarrow \mathbf{D}_{\beta
\alpha }^{\xi },$ and 
\begin{eqnarray*}
\sigma _{A\alpha } &=&\phi _{\alpha \ast }^{\xi \prime }\left( \left. \sigma
_{A^{\prime }}\right\vert _{U_{\alpha }^{\xi \prime }}\right) ,\sigma
_{A\alpha \beta }=\phi _{\alpha \ast }^{\prime }\left( \left. \sigma
_{A^{\prime }}\right\vert _{U_{\alpha }^{\prime }\cap U_{\beta }^{\prime
}}\right) , \\
\omega _{A\alpha } &=&\phi _{\alpha \ast }^{\xi \prime }\left( \left. \omega
_{A^{\prime }}\right\vert _{U_{\alpha }^{\xi \prime }}\right) ,\omega
_{A\alpha \beta }=\phi _{\alpha \ast }^{\prime }\left( \left. \omega
_{A^{\prime }}\right\vert _{U_{\alpha }^{\prime }\cap U_{\beta }^{\prime
}}\right) .
\end{eqnarray*}
\item The obstruction $\chi (A,A^{\prime })\in H^{1}\left( M,\mathbb{R}%
\right) $  is trivial and\\ $\psi _{\mathbf{D}}^{\ast }\left( w_{1}\left( \xi
\right) \right) =w_{1}\left( \xi \right)$.
\end{enumerate}
\end{theorem}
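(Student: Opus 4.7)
The plan is to follow the structure of the scalar theorem and then handle the two extra obstructions, namely the Stiefel--Whitney compatibility and the connection cohomology class.

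For necessity, suppose $\widetilde{\phi}\in\mathbf{Aut}(\xi)$ realizes the equivalence, covering a diffeomorphism $\phi\in\mathcal{G}(M)$. Since the natural invariants $I_{\alpha i}^{\xi}$ are $\mathbf{Aut}(\xi)$-invariant by construction, we have $\phi^{\ast}(I_{\alpha i}^{\xi}(A'))=I_{\alpha i}^{\xi}(A)$. Hence $\phi$ sends the natural atlas of $A$ to the natural atlas of $A'$, and condition (1) holds with $\phi=\psi_{\mathbf{D}}$. Because $\phi$ lifts to $\mathbf{Aut}(\xi)$, the previous lemma forces $\phi^{\ast}(w_1(\xi))=w_1(\xi)$. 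Finally, $\widetilde{\phi}$ intertwines the connections $\nabla^{A}$ and $\nabla^{A'}$, so after applying the pushforward the difference form $\theta$ between $\nabla^{A''}$ and $\nabla^{A}$ is exact, and $\chi(A,A')=0$.

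For sufficiency, I would proceed in three stages. First, the identification of atlases in condition (1) gives a well-defined diffeomorphism $\psi_{\mathbf{D}}:M\to M$: on each $U_{\alpha}^{\xi\prime}$ set $\psi_{\mathbf{D}}=(\phi_{\alpha}^{\xi})^{-1}\circ\phi_{\alpha}^{\xi\prime}$, and the transition-map equalities $\phi_{\alpha\beta}^{\xi\prime}=\phi_{\alpha\beta}^{\xi}$ ensure these local pieces glue. By construction $(\psi_{\mathbf{D}})_{\ast}\sigma_{A'}=\sigma_{A}$ and $(\psi_{\mathbf{D}})_{\ast}\omega_{A'}=\omega_{A}$, since these tensors are reconstructed from the same basic invariants in both atlases. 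Second, because $\psi_{\mathbf{D}}^{\ast}(w_1(\xi))=w_1(\xi)$, the lifting lemma furnishes $\widetilde{\psi_{\mathbf{D}}}\in\mathbf{Aut}(\xi)$ covering $\psi_{\mathbf{D}}$; pushing $A'$ forward gives an operator $A''=\widetilde{\psi_{\mathbf{D}}}_{\ast}(A')$ with $\sigma_{A''}=\sigma_A$ and $\omega_{A''}=\omega_A$.

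The last stage exploits triviality of $\chi(A,A')$. Since $A$ and $A''$ share total symbol, the splitting mapping from the previous section forces their Chern (or Wagner) connections on $M$ to agree, and the two induced connections $\nabla^{A}$, $\nabla^{A''}$ on the line bundle differ by a scalar tensor $\theta\otimes\mathrm{Id}$ with $\theta\in\Omega^{1}(M)$. Equality of the curvature forms $\omega_{A''}=\omega_{A}$ yields $d\theta=0$, and by hypothesis $[\theta]=\chi(A,A')=0$, so $\theta=df$. The gauge transformation by $\exp(f)\in\mathcal{F}(M)\subset\mathbf{Aut}(\xi)$ shifts $\nabla^{A''}$ to $\nabla^{A}$ while preserving $\sigma_{A''}=\sigma_A$ and therefore conjugates $A''$ to $A$; composing with $\widetilde{\psi_{\mathbf{D}}}$ gives the desired element of $\mathbf{Aut}(\xi)$.

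The main obstacle is verifying that after the pushforward by $\widetilde{\psi_{\mathbf{D}}}$ the two operators $A$ and $A''$ really differ only in the connection component and not in the subsymbol pieces $\sigma_{i,A}$, so that the residual discrepancy is genuinely captured by the single closed form $\theta$. This rests on the uniqueness statement of the theorem that singles out $\nabla^{\xi}$ from $\sigma_{2,A}$, and on the fact that the quantization $\mathcal{Q}$ is a module isomorphism, so equality of total symbols and connections implies equality of operators. Once this is in place, the remaining obstruction is purely cohomological, and the gauge argument concludes the proof.
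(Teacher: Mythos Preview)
Your argument is correct and follows the same route as the paper: the paper's proof is the discussion immediately preceding the theorem, which constructs $\psi_{\mathbf{D}}$ from the atlas data, lifts it via the Stiefel--Whitney lemma to obtain $A''=\widetilde{\psi_{\mathbf{D}}}_{\ast}(A')$ with $\sigma_{A''}=\sigma_A$ and $\omega_{A''}=\omega_A$, observes that the two line-bundle connections then differ by a closed $1$-form, and kills the residual gauge by $\exp(f)$ once $\chi(A,A')=0$. Your write-up is in fact more explicit than the paper's, since you spell out the necessity direction and isolate the point that equality of total symbols together with equality of line-bundle connections forces equality of operators via the quantization isomorphism; the paper leaves this last step implicit.
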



\subsection{Equivalence of differential equations of the 3rd order}

Let $A\in \mathbf{Diff}_{3}\left( \xi \right) $ be an operator with the
regular symbol. Then the operator 
\begin{equation*}
A_{0}=\lambda (A)^{-\frac{3}{2}}A,
\end{equation*}%
where $\lambda \left( A\right) =W_{\sigma _{3,A}}\left( \theta _{A},\theta
_{A}\right)$, $W_{\sigma _{3,A}}$ is the Wagner metric and $\theta _{A}$ is
the first covector in the invariant coframe, we  call a \textit{normalization%
} of operator $A.$

It is easy to see that $W_{f\sigma }=f^{-\frac{2}{3}}W_{\sigma }$ and
therefore the normalization of operator $B=fA,$ $f\in \mathcal{F}\left(
M\right) ,$ and the normalization of operator $A$ related as follows 
\begin{equation*}
B_{0}=\rm{sign}(f)\ A_{0}.
\end{equation*}%
A linear differential equation of 3rd order $\mathcal{E}_{A}\subset
J^{3}\left( \xi \right) $ is defined by a conformal class $\left\{
fA\right\} ,$ where $f\in \mathcal{F}\left( M\right) .$ Using the
normalizations we get the following result.

\begin{theorem}
Linear differential equations of the 3rd order $\mathcal{E}_{A}\subset
J^{3}\left( \xi \right) $ and $\mathcal{E}_{B}\subset J^{3}\left( \xi
\right) $ are equivalent with respect to group of automorphisms $\mathbf{Aut}%
(\xi )$ if and only if the normalization $A_{0}$ is equivalent to
normalization $B_{0}$ or $-B_{0}.$
\end{theorem}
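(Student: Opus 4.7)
The plan is to reduce the equivalence problem for equations to the equivalence problem for normalized operators, which is a plain operator-equivalence question handled by the previous theorem. The main observation is that an equation $\mathcal{E}_A$ depends only on the conformal class of $A$, since $A(s)=0$ and $(fA)(s)=0$ define the same subset of $J^{3}(\xi)$ for any $f\in\mathcal{F}(M)$. Therefore $\mathcal{E}_A=\mathcal{E}_{A_0}$, and two operators $A,B$ define $\mathbf{Aut}(\xi)$-equivalent equations if and only if there exist $\widetilde{\phi}\in\mathbf{Aut}(\xi)$ and $f\in\mathcal{F}(M)$ with $\widetilde{\phi}_{\ast}(A)=fB$.

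The key technical point I would establish first is the $\mathbf{Aut}(\xi)$-equivariance of the normalization map $A\mapsto A_0$. The quantity $\lambda(A)=W_{\sigma_{3,A}}(\theta_A,\theta_A)$ is built from two ingredients that are natural with respect to the diffeomorphism group: the Wagner metric $W$ (Theorem on the map $W\colon\Sigma_{3,0}(M)\to\Sigma_{0}^{2}(M)$) and the invariant coframe form $\theta$ constructed in Section~4. Hence for $\widetilde{\phi}\in\mathbf{Aut}(\xi)$ covering $\phi\in\mathcal{G}(M)$ we have $\lambda(\widetilde{\phi}_{\ast}A)=\phi_{\ast}\lambda(A)$. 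Combining this with the elementary identity $\widetilde{\phi}_{\ast}(gA)=(\phi_{\ast}g)\,\widetilde{\phi}_{\ast}(A)$ for $g\in C^{\infty}(M)$, I obtain $(\widetilde{\phi}_{\ast}A)_0=\widetilde{\phi}_{\ast}(A_0)$.

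With this equivariance in hand the two implications are short. For the forward direction, assume $\widetilde{\phi}_{\ast}(A)=fB$ with $f\in\mathcal{F}(M)$. Applying normalization and using the stated rule $(fB)_0=\mathrm{sign}(f)\,B_0$ together with equivariance gives
\begin{equation*}
\widetilde{\phi}_{\ast}(A_0)=(\widetilde{\phi}_{\ast}A)_0=(fB)_0=\mathrm{sign}(f)\,B_0\in\{B_0,-B_0\},
\end{equation*}
so $A_0$ is $\mathbf{Aut}(\xi)$-equivalent to $B_0$ or $-B_0$. Conversely, if $\widetilde{\phi}_{\ast}(A_0)=\pm B_0$, then pushing forward the equation $\mathcal{E}_A=\mathcal{E}_{A_0}$ by $\widetilde{\phi}$ produces the equation of $\pm B_0$, which coincides with $\mathcal{E}_{B_0}=\mathcal{E}_B$ because the two operators $B_0$ and $-B_0$ lie in the same conformal class.

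The only real obstacle is the equivariance step, and even that is purely formal once one recognises that every piece entering the definition of $\lambda(A)$ — the principal symbol, the Wagner metric, and the canonical coframe — was constructed in earlier sections precisely to be $\mathcal{G}(M)$-natural; the $\mathbf{Aut}(\xi)$-action on operators acts on these pieces through its projection to $\mathcal{G}(M)$, so the formulas transfer without modification. No further computation beyond tracking signs of the scalar factor $f$ is required.
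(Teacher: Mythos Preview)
Your proposal is correct and follows exactly the route the paper intends: the paper gives no formal proof of this theorem, simply stating that it follows ``using the normalizations'' from the relation $(fA)_0=\mathrm{sign}(f)\,A_0$, and you have spelled out precisely those details, including the $\mathbf{Aut}(\xi)$-equivariance of $A\mapsto A_0$ that the paper leaves implicit.
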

\bigskip

{\bf Acknowledgements}\medskip

This work is supported by the Russian Foundation for Basic Research under grant 18-29-10013 mk.
\bigskip

\end{document}